\def\version{30/05/2011 version 6: to appear in Central
European Journal of Mathematics
\hfill
\href{http://arxiv.org/abs/1005.1662}{arxiv:1005.1662}
}
\theoremstyle{plain}
\newtheorem{thm}{Theorem}[section]
\newtheorem{lem}[thm]{Lemma}
\newtheorem{prop}[thm]{Proposition}
\newtheorem{cor}[thm]{Corollary}
\theoremstyle{definition}
\newtheorem{rem}[thm]{Remark}
\numberwithin{equation}{section}
\def\ie{\emph{i.e.}}
\def\:{\colon}
\def\.{\cdot}
\def\<{\langle}%\left
\def\>{\rangle}%\right
\def\({\left(}
\def\){\right)}
\def\ph#1{\phantom{#1}}
\def\epsilon{\varepsilon}
\def\phi{\varphi}
\def\leq{\leqslant}
\def\geq{\geqslant}
\def\la{\leftarrow}
\def\lra{\longrightarrow}
\def\Lra{\Longrightarrow}
\def\ra{\rightarrow}
\def\iso{\cong}
\def\F{\mathbb{F}}
\def\Q{\mathbb{Q}}
\def\Z{\mathbb{Z}}
\def\ideal{\triangleleft}%\lhd
\DeclareMathOperator{\rad}{rad}
\DeclareMathOperator{\Tor}{Tor}
\def\id{\mathrm{id}}
\def\nr{\mathrm{nr}}
\def\Enr{E^\nr}
\DeclareMathOperator{\soc}{soc}
\DeclareMathOperator{\gldim}{gl\ph{-}dim}
\DeclareMathOperator{\projdim}{proj\ph{-}dim}
\DeclareMathOperator{\res}{res}
\DeclareMathOperator{\Cotor}{Cotor}
\begin{document}
\title
[Lubin-Tate cohomology of classifying spaces]
{Some properties of Lubin-Tate cohomology for classifying
spaces of finite groups}
%[Galois theory and Lubin-Tate cochains]
%{Galois theory and Lubin-Tate cochains on classifying spaces}
\author{Andrew Baker \and Birgit Richter}
\address{School of Mathematics \& Statistics, University
of Glasgow, Glasgow G12 8QW, Scotland.}
\email{a.baker@maths.gla.ac.uk}
\urladdr{\href{http://www.maths.gla.ac.uk/~ajb}
                       {http://www.maths.gla.ac.uk/$\sim$ajb}}
\address{Fachbereich Mathematik der Universit\"at Hamburg,
Bundesstrasse 55, 20146 Hamburg, Germany.}
\email{birgit.richter@uni-hamburg.de}
\urladdr{\href{http://www.math.uni-hamburg.de/home/richter/}
              {http://www.math.uni-hamburg.de/home/richter/}}
\date{\version}
\keywords{Lubin-Tate spectrum, Morava $K$-theory, classifying
spaces of finite groups, Galois extensions}
\subjclass[2000]{Primary 55P43; Secondary 13B05 55N22 55P60}
\thanks{The second author was supported by the Glasgow Mathematical
Journal Learning and Research Support Fund, and she thanks the
Mathematics Department of the University of Glasgow for its
hospitality. We would like to thank Tilman Bauer, Ken Brown, Jim
Davis, Uli Kr\"ahmer and Jesper M\o ller for helpful comments.}
\begin{abstract}
We consider brave new cochain extensions $F(BG_+,R)\lra F(EG_+,R)$,
where $R$ is either a Lubin-Tate spectrum $E_n$ or the related
$2$-periodic Morava K-theory $K_n$, and $G$ is a finite group.
When $R$ is an Eilenberg-Mac~Lane spectrum, in some good cases
such an extension is a $G$-Galois extension in the sense of
John Rognes, but not always faithful. We prove that for $E_n$
and $K_n$ these extensions are always faithful in the $K_n$
local category. However, for a cyclic $p$-group $C_{p^r}$, the
cochain extension $F({BC_{p^r}}_+,E_n) \lra F({EC_{p^r}}_+,E_n)$
is not a Galois extension because it ramifies. As a consequence,
it follows that the $E_n$-theory Eilenberg-Moore spectral sequence
for $G$ and $BG$ does not always converge to its expected target.
\end{abstract}

\maketitle

\section{Introduction}

In the algebraic Galois theory of commutative rings~\cite{CHR},
faithful flatness is a property implied by separability. However,
in the topological analogue, the brave new Galois theory of
Rognes~\cite{JR:Opusmagnus}, this is not true. The simplest
counterexample, due to Ben Wieland~\cite{JR:unfaithful}, is
provided by the $C_2$-Galois extension
\begin{equation} \label{eq:wieland}
F(B{C_2}_+,H\F_2) \lra F(E{C_2}_+,H\F_2) \sim H\F_2
\end{equation}
which is not faithful. This example relies on the algebraic
fact that
\[
\pi_*(F(B{C_2}_+,H\F_2))=H^{-*}(BC_2;\F_2)
\]
is a polynomial algebra and so has finite global dimension.

In this note we consider this question for a Lubin-Tate
spectrum $E_n$ and the related Morava $K$-theory $K_n$,
and show that for any finite group~$G$, the extension
\begin{equation}\label{eq:En-cochains}
E_n^{BG}=F(BG_+,E_n) \lra F(EG_+,E_n) \sim E_n
\end{equation}
is faithful as an $E_n$-module. We also show that the
non-commutative extension
\begin{equation}\label{eq:Kn-cochains}
F(BG_+,K_n) \lra F(EG_+,K_n) \sim K_n
\end{equation}
is faithful and $F(BG_+,K_n)$ is a faithful $E_n$-module.
A crucial difference from $F(BG_+,H\F_p)$ is that $K_n^*(BG_+)$
is always an Artinian algebra over $(K_n)_*$, and so if
$K_n^*(BG_+)\neq K_n^*$ then it has infinite global dimension by
Proposition~\ref{prop:Artin-dim}.
%In the special case of
%a finite non-trivial $p$-group $G$ we can deduce that $K_n$
%is not dualizable as an $F(BG_+,K_n)$-module spectrum (see
%Theorem~\ref{thm:K*BG-dim}). In particular, the extension
%of~\eqref{eq:Kn-cochains} is not a Galois extension in the
%associative setting. We demonstrate the failure
%of~\eqref{eq:En-cochains} to be a $G$-Galois extension in
%many cases.

Our approach to this involves introducing an analogue of the
algebraic socle series for a module over an Artinian ring, and
we show that this behaves well enough to prove our result.

We show in Section~\ref{sec:E^BG&GaloisThy} that for a cyclic
$p$-group $C_{p^r}$, the cochain extension
$F({BC_{p^r}}_+,E_n) \lra F({EC_{p^r}}_+,E_n)$ is ramified and
hence it is not a Galois extension. As a consequence it follows
that the $E_n$-theory Eilenberg-Moore spectral sequence for such
groups does not converge to its expected target, whereas work
of Tilman Bauer indicates that this is not the case for Morava
$K$-theory.

\subsection*{Notation, etc}
In discussing purely algebraic notions we will often use boldface
symbols $\boldsymbol{A},\boldsymbol{M},\ldots$ to denote rings,
modules, etc, while for topological objects such as $S$-algebras
and their modules we will use italic symbols $A,M,\ldots$, thereby
hopefully reducing the possibility of confusion between the two
settings. For an associative $S$-algebra $A$, we denote by
$\mathscr{D}_{A}$ the derived category of $A$-module spectra defined
in~\cite[chapter~III, construction~ 2.11]{EKMM}.

We follow Lam~\cite[theorem 19.1]{Lam:Noncomm} in using the
phrase \emph{local ring} to indicate a ring with a unique
maximal left ideal (necessarily $2$-sided and equal to its
Jacobson radical); the quotient of such a ring by its Jacobson
radical is a division ring. For non-commutative rings other
terminology is often encountered such as \emph{scalar local
ring}.

\subsection*{Brave new Galois extensions}

The following definition of a Galois extension is due to John
Rognes~\cite{JR:Opusmagnus}. Let $A$ be a commutative $S$-algebra
and let $B$ be a commutative cofibrant $A$-algebra. Let $G$ be
a finite (discrete) group and suppose that there is an action
of $G$ on $B$ by commutative $A$-algebra morphisms. Then $B/A$
is a \emph{$G$-Galois extension} if it satisfies the following
two conditions:
\begin{itemize}
\item
The natural map
\[
A\lra B^{hG}=F(EG_+,B)^G
\]
is a weak equivalence of $A$-algebras.
\item
There is a natural equivalence of $B$-algebras
\[
\Theta\:B\wedge_A B\xrightarrow{\sim}F(G_+,B)
\]
induced from the action of $G$ on the right hand
factor of~$B$.
\end{itemize}
Furthermore, $B/A$ is a \emph{faithful $G$-Galois extension}
if it also satisfies
\begin{itemize}
\item
$B$ is faithful as an $A$-module, \ie, for any $A$-module $M$,
$B\wedge_A M \sim *$ implies that $M \sim *$.
\end{itemize}

Examples like \eqref{eq:wieland}  show that not every Galois
extension is faithful.

\section{Recollections on modules over Artinian algebras}
\label{sec:ArtinAlg&mod}

In this section we review some standard algebraic background
material; good sources for this are~\cite{Alperin,Lam:Noncomm}.

Let $\boldsymbol{D}$ be a division ring. A ring $\boldsymbol{A}$
equipped with homomorphisms of rings
$\eta\:\boldsymbol{D}\lra\boldsymbol{A}$ and
$\epsilon\:\boldsymbol{A}\lra\boldsymbol{D}$ is an \emph{augmented
$\boldsymbol{D}$-algebra} if the following diagram commutes.
\[
\xymatrix{
\boldsymbol{D} \ar[rr]^=\ar[dr]_\eta&& \boldsymbol{D} \\
&\boldsymbol{A}\ar[ur]_\epsilon&
}
\]
The augmentation $\epsilon$ splits the unit $\eta$. We will
also say that $\boldsymbol{A}$ is an \emph{Artinian local}
$\boldsymbol{D}$-algebra if it is Artinian and local.

If $\boldsymbol{A}$ is an Artinian local augmented
$\boldsymbol{D}$-algebra, then the  Jacobson radical of
$\boldsymbol{A}$ is
\[
\boldsymbol{J} = \rad(\boldsymbol{A}) = \ker\epsilon.
\]
By~\cite[theorem~4.12]{Lam:Noncomm}, $\boldsymbol{J}$ is
nilpotent, say $\boldsymbol{J}^{e}=0$ and $\boldsymbol{J}^{e-1}\neq0$.
\begin{lem}\label{lem:ArtinAlg&mod}
Let $\boldsymbol{A}$ be as above and let $\boldsymbol{M}$
be a left $\boldsymbol{A}$-module.  If\/
$\boldsymbol{D}\otimes_{\boldsymbol{A}}\boldsymbol{M}=0$,
then $\boldsymbol{M}=0$.
\end{lem}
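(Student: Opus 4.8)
The plan is to reduce the statement to the nilpotence of $\boldsymbol{J}$ by a Nakayama-type argument. First I would observe that $\boldsymbol{D}$ becomes a right $\boldsymbol{A}$-module through the augmentation $\epsilon$, and that as such $\boldsymbol{D}\iso\boldsymbol{A}/\boldsymbol{J}$ since $\boldsymbol{J}=\ker\epsilon$. Applying $(-)\otimes_{\boldsymbol{A}}\boldsymbol{M}$ to the short exact sequence of right $\boldsymbol{A}$-modules
\[
0\lra\boldsymbol{J}\lra\boldsymbol{A}\lra\boldsymbol{D}\lra0
\]
and using right exactness of the tensor product identifies $\boldsymbol{D}\otimes_{\boldsymbol{A}}\boldsymbol{M}$ with the quotient $\boldsymbol{M}/\boldsymbol{J}\boldsymbol{M}$, where $\boldsymbol{J}\boldsymbol{M}\subseteq\boldsymbol{M}$ denotes the image of the multiplication map $\boldsymbol{J}\otimes_{\boldsymbol{A}}\boldsymbol{M}\to\boldsymbol{M}$. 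Hence the hypothesis $\boldsymbol{D}\otimes_{\boldsymbol{A}}\boldsymbol{M}=0$ says precisely that $\boldsymbol{M}=\boldsymbol{J}\boldsymbol{M}$.

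Next I would iterate this identity: from $\boldsymbol{M}=\boldsymbol{J}\boldsymbol{M}$ we get $\boldsymbol{M}=\boldsymbol{J}\boldsymbol{M}=\boldsymbol{J}(\boldsymbol{J}\boldsymbol{M})=\boldsymbol{J}^2\boldsymbol{M}$, and inductively $\boldsymbol{M}=\boldsymbol{J}^k\boldsymbol{M}$ for every $k\geq1$. Taking $k=e$, where $\boldsymbol{J}^e=0$ by the nilpotence of $\boldsymbol{J}$ recalled just before the statement, yields $\boldsymbol{M}=\boldsymbol{J}^e\boldsymbol{M}=0$.

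I do not expect a genuine obstacle here; the only points needing a little care are the non-commutative bookkeeping (using $\boldsymbol{D}$ as a \emph{right} $\boldsymbol{A}$-module so that $\boldsymbol{D}\otimes_{\boldsymbol{A}}\boldsymbol{M}$ makes sense for the left module $\boldsymbol{M}$) and the observation that, in contrast to the classical Nakayama lemma, no finiteness hypothesis on $\boldsymbol{M}$ is required because $\boldsymbol{J}$ is genuinely nilpotent rather than merely contained in the Jacobson radical.
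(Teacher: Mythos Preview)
Your proof is correct and follows essentially the same route as the paper: identify $\boldsymbol{D}\otimes_{\boldsymbol{A}}\boldsymbol{M}$ with $\boldsymbol{M}/\boldsymbol{J}\boldsymbol{M}$ via right exactness of the tensor product, then iterate using the nilpotence of $\boldsymbol{J}$. The paper packages the identification as a comparison of two horizontal exact sequences, but the content is identical, and your remarks on the right-module structure and on why no finiteness hypothesis is needed are apt.
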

\begin{proof}
Comparing the two horizontal exact sequences
\[
\xymatrix{
&\boldsymbol{J}\otimes_{\boldsymbol{A}} \boldsymbol{M} \ar[r]\ar[d]
&\boldsymbol{A}\otimes_{\boldsymbol{A}} \boldsymbol{M} \ar[r]\ar[d]^{\iso}
& \boldsymbol{D}\otimes_{\boldsymbol{A}} \boldsymbol{M} \ar[r]\ar[d]^{\iso} & 0 \\
0\ar[r] &\boldsymbol{J}\boldsymbol{M} \ar[r] & \boldsymbol{M} \ar[r]
& \boldsymbol{M}/\boldsymbol{J}\boldsymbol{M} \ar[r] & 0
}
\]
we see that if $\boldsymbol{D}\otimes_{\boldsymbol{A}}\boldsymbol{M}=0$
then
\[
\boldsymbol{M}=\boldsymbol{J}\boldsymbol{M}
              =\ldots=\boldsymbol{J}^{e}\boldsymbol{M}=0.
\qedhere
\]
\end{proof}

Let $\boldsymbol{M}$ be a left $\boldsymbol{A}$-module.
The \emph{socle} of $\boldsymbol{M}$ is the submodule
\[
\soc^1\boldsymbol{M} = \soc\boldsymbol{M}
= \{x\in\boldsymbol{M}:\boldsymbol{J}x=0\},
\]
which can also be characterized as the sum of all the
simple $\boldsymbol{A}$-submodules of $\boldsymbol{M}$.
The \emph{socle series} of $\boldsymbol{M}$ is the
increasing sequence of submodules
\begin{equation*}
0=\soc^0\boldsymbol{M}\subseteq\soc^1\boldsymbol{M}
  \subseteq\ldots\subseteq\soc^k\boldsymbol{M}\subseteq
  \soc^{k+1}\boldsymbol{M}\subseteq\ldots\subseteq\boldsymbol{M},
\end{equation*}
where for each $k$ the following is a pullback square
\begin{equation*}
\xymatrix{
\soc^{k+1}\boldsymbol{M} \ar[r]\ar[d]
           & \soc(\boldsymbol{M}/\soc^k\boldsymbol{M})\ar[d] \\
\boldsymbol{M} \ar[r] & \boldsymbol{M}/\soc^k\boldsymbol{M}
}
\end{equation*}
so we have
\begin{equation*}
\soc^k\boldsymbol{M} = \{x\in\boldsymbol{M}:\boldsymbol{J}^kx=0\},
\end{equation*}
and
\begin{equation*}
\soc^e\boldsymbol{M} = \boldsymbol{M}.
\end{equation*}
In fact, for small $k$
\begin{equation*}
\soc^k\boldsymbol{M}\subset\soc^{k+1}\boldsymbol{M},
\end{equation*}
until we reach a value $k=k_0\leq e$ for which
$\soc^{k_0}\boldsymbol{M}=\boldsymbol{M}$.

It is also clear that given a homomorphism
$\phi\:\boldsymbol{M}\lra\boldsymbol{N}$ of $\boldsymbol{A}$-modules
there are compatible homomorphisms
\[
\soc^k\boldsymbol{M}\lra\soc^k\boldsymbol{N}.
\]
For details on the socle series see~\cite{Lam:Noncomm},
especially Ex.~4.18, and~\cite[chapter~I, section~1]{Alperin}.

We end this section with a result that supplies an algebraic
backdrop for some of our later work. We give a proof suggested
by K.~Brown.
\begin{prop}\label{prop:Artin-dim}
Let $\boldsymbol{A}$ be a local left-Artinian ring which is
not a division ring. Then
\[
\projdim (\boldsymbol{A}/\rad(\boldsymbol{A}))
                          = \gldim\boldsymbol{A} = \infty,
\]
where $\boldsymbol{A}/\rad(\boldsymbol{A})$ is the unique
simple left $\boldsymbol{A}$-module.
\end{prop}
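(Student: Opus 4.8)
The plan is to prove the stronger statement that $\projdim_{\boldsymbol{A}}(\boldsymbol{A}/\rad(\boldsymbol{A}))=\infty$; since $\gldim\boldsymbol{A}$ is by definition the supremum of the projective dimensions of all left $\boldsymbol{A}$-modules, this forces $\gldim\boldsymbol{A}=\infty$ as well, and the two quantities then agree. Write $\boldsymbol{J}=\rad(\boldsymbol{A})$ and $\boldsymbol{D}=\boldsymbol{A}/\boldsymbol{J}$, the unique simple left $\boldsymbol{A}$-module. Since $\boldsymbol{A}$ is left-Artinian, $\boldsymbol{J}$ is nilpotent; let $e\geq1$ be least with $\boldsymbol{J}^{e}=0$. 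As $\boldsymbol{A}$ is not a division ring we have $\boldsymbol{J}\neq0$, so $e\geq2$ and $\boldsymbol{J}^{e-1}\neq0$.

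First I would build a \emph{minimal} free resolution of $\boldsymbol{D}$. A left-Artinian ring is semiperfect and left Noetherian, so every finitely generated left module has a projective cover with finitely generated kernel; iterating projective covers starting from $\boldsymbol{D}$ yields an exact complex
\[
\cdots\to P_{2}\xrightarrow{\partial_{2}}P_{1}\xrightarrow{\partial_{1}}P_{0}\to\boldsymbol{D}\to0
\]
of finitely generated projective left $\boldsymbol{A}$-modules with $\partial_{i}(P_{i})\subseteq\boldsymbol{J}P_{i-1}$ for all $i\geq1$ (the ``superfluous kernel'' property of projective covers). Over the local ring $\boldsymbol{A}$ each $P_{i}$ is in fact free, say $P_{i}\cong\boldsymbol{A}^{r_{i}}$, and if some $P_{i}=0$ then $P_{j}=0$ for all $j\geq i$; hence $\projdim_{\boldsymbol{A}}\boldsymbol{D}$ equals the largest $d$ with $P_{d}\neq0$, or is $\infty$.

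Then I would argue by contradiction: suppose $\projdim_{\boldsymbol{A}}\boldsymbol{D}=d<\infty$. One cannot have $d=0$, since then $\boldsymbol{D}\cong P_{0}$ would be free, which is impossible for a simple module once $\boldsymbol{J}\neq0$. So $d\geq1$, $P_{d}\neq0$ and $P_{d+1}=0$, whence $\ker\partial_{d}=\im\partial_{d+1}=0$ and $\partial_{d}\colon P_{d}\hookrightarrow P_{d-1}$ is injective. Since $\partial_{d}$ is $\boldsymbol{A}$-linear with $\partial_{d}(P_{d})\subseteq\boldsymbol{J}P_{d-1}$, we get
\[
\partial_{d}\bigl(\boldsymbol{J}^{e-1}P_{d}\bigr)=\boldsymbol{J}^{e-1}\partial_{d}(P_{d})\subseteq\boldsymbol{J}^{e-1}\bigl(\boldsymbol{J}P_{d-1}\bigr)=\boldsymbol{J}^{e}P_{d-1}=0,
\]
so injectivity forces $\boldsymbol{J}^{e-1}P_{d}=0$. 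But $P_{d}\cong\boldsymbol{A}^{r_{d}}$ with $r_{d}\geq1$, so $\boldsymbol{J}^{e-1}P_{d}\cong(\boldsymbol{J}^{e-1})^{r_{d}}$, forcing $\boldsymbol{J}^{e-1}=0$ and contradicting the choice of $e$. Hence $\projdim_{\boldsymbol{A}}\boldsymbol{D}=\infty$.

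The main obstacle is not the final computation, which is immediate, but the structural input assembled in the first step: that over a (possibly noncommutative) local left-Artinian ring finitely generated projective modules are free, that projective covers of finitely generated modules exist, and that iterating them produces a minimal resolution whose differentials land in $\boldsymbol{J}P_{i-1}$ and in which $P_{i}\neq0$ exactly for $0\leq i\leq\projdim_{\boldsymbol{A}}\boldsymbol{D}$. These are standard facts about semiperfect (indeed semiprimary) rings and can be quoted from Lam's book; granting them, the nilpotency of $\boldsymbol{J}$ closes the argument in one line.
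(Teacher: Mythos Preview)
Your proof is correct but takes a genuinely different route from the paper's. You invoke the semiperfect machinery to build a minimal free resolution of $\boldsymbol{D}$ (projective covers exist, finitely generated projectives over a local ring are free, differentials land in $\boldsymbol{J}P_{i-1}$), and then use nilpotency of $\boldsymbol{J}$ to show that the last differential in a hypothetical finite minimal resolution cannot be injective. The paper's argument is shorter and avoids minimal resolutions entirely: since $\boldsymbol{A}$ is Artinian it contains a minimal (hence simple) left ideal $\boldsymbol{I}\cong\boldsymbol{D}$, and since $\boldsymbol{A}$ is local (no nontrivial idempotents) the sequence $0\to\boldsymbol{I}\to\boldsymbol{A}\to\boldsymbol{A}/\boldsymbol{I}\to0$ does not split; hence if $\projdim\boldsymbol{D}=\gldim\boldsymbol{A}$ were finite one would obtain $\projdim(\boldsymbol{A}/\boldsymbol{I})=\projdim\boldsymbol{I}+1=\projdim\boldsymbol{D}+1>\gldim\boldsymbol{A}$, a contradiction. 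The paper's approach needs only the elementary behaviour of projective dimension in short exact sequences, whereas yours, though heavier in prerequisites, makes the mechanism visible by showing concretely why the resolution can never terminate.
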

\begin{proof}
Since $\boldsymbol{A}$ is local, it has only one simple
module and therefore
\[
\projdim (\boldsymbol{A}/\rad(\boldsymbol{A}))
                                = \gldim \boldsymbol{A}.
\]
Also, since $\boldsymbol{A}$ is Artinian it has a left
ideal~$\boldsymbol{I}$ isomorphic to
$\boldsymbol{A}/\rad(\boldsymbol{A})$. The corresponding
exact sequence
\begin{equation}\label{eq:Artin-dim}
0\ra \boldsymbol{I}\lra \boldsymbol{A}
                   \lra \boldsymbol{A}/\boldsymbol{I} \ra 0
\end{equation}
cannot split since $\boldsymbol{A}$ is local and therefore
it has no non-trivial idempotents.

If
\begin{equation*}
\projdim (\boldsymbol{A}/\rad(\boldsymbol{A}))
                    = \gldim\boldsymbol{A} < \infty,
\end{equation*}
then~\eqref{eq:Artin-dim} would give
\begin{equation*}
\projdim (\boldsymbol{A}/\rad(\boldsymbol{A})) + 1
   = \projdim (\boldsymbol{A}/\boldsymbol{I})
   \leq \gldim \boldsymbol{A}
   = \projdim (\boldsymbol{A}/\rad(\boldsymbol{A})),
\end{equation*}
which is impossible.
\end{proof}

\begin{rem}\label{rem:gradings}
We end this section by noting that the above discussion works
as well if we assume that $\boldsymbol{A}$ is graded, provided
this is suitably interpreted. In our work below we are interested
in $\Z$-gradings which are also $2$-periodic, \ie, for all $n\in\Z$,
$(-)_{n+2}=(-)_n$. This can be interpreted as a $\Z/2$-grading.
\end{rem}

\section{Socle series in topology}\label{sec:Top-Socle}

Let $D$ be an $S$-algebra for which
% $\pi_*D$ is a $2$-periodic
% graded division ring, \ie,
$\pi_0D$ is a non-trivial
division ring, $\pi_1D=0$, and the graded ring $\pi_*D=\boldsymbol{D}$
has period two.
%\ie, $\pi_0D=\boldsymbol{D}$ is a non-trivial
%division ring and $\pi_1D=0$.
Suppose that $A$ is an $S$-algebra
both under and over $D$, giving the following diagram of morphisms
of $S$-algebras.
\begin{equation}\label{eq:D-A-D}
\xymatrix{
D \ar[rr]^{=}\ar[rd]_\eta && D \\
& A\ar[ru]_\epsilon &
}
\end{equation}

We assume that $\boldsymbol{A}=\pi_*A$ is an Artinian local augmented
$\boldsymbol{D}$-algebra, so that the augmentation ideal $\ker\epsilon$
is the Jacobson radical of $\boldsymbol{A}$, $\rad(\boldsymbol{A})$,
and also $\rad(\boldsymbol{A})^e=0$ and $\rad(\boldsymbol{A})^{e-1}\neq0$.

\begin{rem}\label{rem:soc-Dmod}
Let $M$ be a left $A$-module. Then $\boldsymbol{M}=\pi_*M$ is a
left $\boldsymbol{A}$-module and its socle $\soc\boldsymbol{M}$
is a $\boldsymbol{D}$-module through both the unit $\eta$ and
the augmentation $\epsilon$, and these module structures agree
since $\rad(\boldsymbol{A})=\ker\epsilon$.
\end{rem}

\begin{thm}\label{thm:TopSocle}
There are functors $\soc^k\:\mathscr{D}_{A}\lra\mathscr{D}_{A}$
for $0\leq k\leq e$ such that \\
\emph{(a)} for each $k$, $\pi_*(\soc^k M)=\soc^k\boldsymbol{M}$; \\
\emph{(b)} there are natural transformations $\soc^k M\lra\soc^{k+1}M$
giving a commutative diagram
\[
\xymatrix{
0\ar[r] & \pi_*\soc^1 M\ar[r]\ar[d]^\iso & \pi_*\soc^2 M\ar[r]\ar[d]^\iso
 & {\ldots} \ar[r]&\pi_*\soc^e M\ar[r]\ar[d]^\iso &0 \\
0\ar[r] & \soc^1 \boldsymbol{M}\ar[r] & \soc^2\boldsymbol{M}\ar[r]
 & {\ldots} \ar[r] &\soc^e \boldsymbol{M}\ar[r] &0
}
\]
which is natural with respect to morphisms of $A$-modules.
\end{thm}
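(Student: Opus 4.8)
The plan is to build the $\soc^k$ by induction on $k$, realising on $A$-modules the step-by-step construction of the algebraic socle series from Section~\ref{sec:ArtinAlg&mod}; I would work throughout in a fixed cofibrantly generated model category of $A$-modules so that cofibres and homotopy pullbacks are strictly functorial. The structural fact I would use repeatedly is that, since $\pi_0D$ is a division ring, $\pi_1D=0$ and $\boldsymbol{D}$ is $2$-periodic, $\boldsymbol{D}$ is a graded division ring concentrated in even degrees; hence every graded $\boldsymbol{D}$-module is free, the relevant universal coefficient spectral sequences in $\mathscr{D}_D$ collapse without extension problems, and $\pi_*$ induces an equivalence $\mathscr{D}_D\simeq(\text{graded }\boldsymbol{D}\text{-modules})$. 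In particular every $\boldsymbol{D}$-module is realised by an essentially unique $D$-module, functorially, and maps of $D$-modules are detected on $\pi_*$.

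\emph{Base case $k=1$.} By Remark~\ref{rem:soc-Dmod}, $\soc^1\boldsymbol{M}=\soc\boldsymbol{M}$ is naturally a $\boldsymbol{D}$-module; let $\soc^1M\in\mathscr{D}_D\subseteq\mathscr{D}_A$ be the corresponding $D$-module, viewed as an $A$-module through $\epsilon$. This is a functor on $\mathscr{D}_A$ with $\pi_*\soc^1M=\soc^1\boldsymbol{M}$, and the substantive point is to equip it with a natural transformation $\soc^1M\to M$ inducing the inclusion $\soc^1\boldsymbol{M}\hookrightarrow\boldsymbol{M}$. Via the adjunction $\epsilon^*\dashv\epsilon_*=F_A(D,-)$ attached to the augmentation, such a map corresponds to a $\boldsymbol{D}$-linear map $\soc\boldsymbol{M}\to\pi_*F_A(D,M)$, and its effect on $\pi_*$ is computed from the counit $\epsilon^*F_A(D,M)\to M$, which sends a class $\alpha\:\Sigma^nD\to M$ to the image under $\alpha_*$ of the generator of $\pi_*\Sigma^nD$; this always lands in $\soc\boldsymbol{M}$ because $\boldsymbol{J}$ annihilates that generator. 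Thus the desired natural transformation exists once one knows the induced homomorphism $\pi_*F_A(D,M)\to\soc\boldsymbol{M}$ is onto — equivalently, that every $\boldsymbol{A}$-linear map $\boldsymbol{D}\to\boldsymbol{M}$ lifts to $\mathscr{D}_A(D,M)$ — in which case it splits since $\soc\boldsymbol{M}$ is $\boldsymbol{D}$-free. Proving this surjectivity, and the naturality of the splitting in $M$, is the technical heart of the argument; the obstruction groups all live in $\mathscr{D}_D$ and vanish by the semisimplicity above.

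\emph{Inductive step.} Suppose $\soc^kM$ and a natural $\iota_k\:\soc^kM\to M$ inducing $\soc^k\boldsymbol{M}\hookrightarrow\boldsymbol{M}$ are given, and let $C=\mathrm{cofib}(\iota_k)$. Since $\iota_k$ is injective on homotopy, $\pi_*C=\boldsymbol{M}/\soc^k\boldsymbol{M}$ and the natural map $M\to C$ is surjective on homotopy. Applying the base case to $C$ gives $\soc^1C\to C$ inducing $\soc(\boldsymbol{M}/\soc^k\boldsymbol{M})\hookrightarrow\boldsymbol{M}/\soc^k\boldsymbol{M}$; define $\soc^{k+1}M$ to be the homotopy pullback of $M\to C\leftarrow\soc^1C$. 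Because $M\to C$ is a $\pi_*$-epimorphism the Mayer--Vietoris sequence of the homotopy pullback splits into short exact sequences and yields
\[
\pi_*\soc^{k+1}M=\pi_*M\times_{\pi_*C}\pi_*\soc^1C=\boldsymbol{M}\times_{\boldsymbol{M}/\soc^k\boldsymbol{M}}\soc(\boldsymbol{M}/\soc^k\boldsymbol{M})=\soc^{k+1}\boldsymbol{M},
\]
the last equality being the pullback square defining the algebraic socle series. The projection $\soc^{k+1}M\to M$ induces $\soc^{k+1}\boldsymbol{M}\hookrightarrow\boldsymbol{M}$, and since the composite $\soc^kM\to M\to C$ is canonically null-homotopic, $\iota_k$ factors through the homotopy pullback, producing the natural map $\soc^kM\to\soc^{k+1}M$ over $M$. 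This gives (a); for $k=e$ we get $\pi_*\soc^eM=\soc^e\boldsymbol{M}=\boldsymbol{M}$, so $\soc^eM\simeq M$.

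Part (b) is then formal: by construction each map in the tower $\soc^1M\to\cdots\to\soc^eM=M$ induces, under the identifications from (a), the corresponding inclusion in the algebraic socle series, so the ladder commutes, and its naturality in $M$ follows from the functoriality of restriction and coinduction along $\epsilon$, of cofibres, and of homotopy pullbacks. The essential obstacle is the base case. In contrast to the algebraic situation one cannot take $\pi_0F_A(D,M)$ for $\soc^1M$: by Proposition~\ref{prop:Artin-dim} the $\boldsymbol{A}$-module $\boldsymbol{D}$ has infinite projective dimension, so $F_A(D,M)$ carries unwanted higher $\Ext$ contributions. Overcoming this is exactly where the hypotheses on $D$ are used — they make $\mathscr{D}_D$ behave like a category of graded vector spaces, so that the socle, being a $\boldsymbol{D}$-module, can both be realised and mapped into $M$ without obstruction.
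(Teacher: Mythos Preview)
Your inductive step via homotopy pullbacks of $M\to M/\soc^kM\leftarrow\soc(M/\soc^kM)$ is exactly what the paper does, and your Mayer--Vietoris argument for $\pi_*\soc^{k+1}M\cong\soc^{k+1}\boldsymbol{M}$ matches the paper's ``standard diagram chase''. The difference is in the base case.

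You route the construction of $\soc^1M\to M$ through the augmentation adjunction $\epsilon^*\dashv F_A(D,-)$, which then forces you to establish that the evaluation $\pi_*F_A(D,M)\to\soc\boldsymbol{M}$ is surjective and to choose a (natural) splitting. The paper avoids this entirely by using the \emph{unit} $\eta\:D\to A$ rather than the augmentation: since $M$ is already a $D$-module via $\eta$ and $\boldsymbol{D}$ is a graded division ring, one can simply pick a $\boldsymbol{D}$-basis of $\soc\boldsymbol{M}$ and write down a morphism of $D$-modules $\bigvee_j\Sigma^{s(j)}D\to M$ realising it; Remark~\ref{rem:soc-Dmod} (that the two $\boldsymbol{D}$-module structures on $\soc\boldsymbol{M}$, via $\eta$ and via $\epsilon$, coincide) is then invoked to conclude that this is in fact a map of $A$-modules, and one sets $\soc M=\bigvee_j\Sigma^{s(j)}D$. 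So there is no $F_A(D,M)$, no surjectivity question, and none of the ``unwanted higher $\Ext$ contributions'' you warn about ever enter. What your approach buys is an explicit acknowledgement that upgrading a $\pi_*$-level inclusion to a map in $\mathscr{D}_A$ is the genuine content here; what the paper's approach buys is brevity, by letting $\eta$ do the work of producing the map and letting Remark~\ref{rem:soc-Dmod} justify its $A$-linearity. Neither construction is more functorial than the other: the paper chooses a basis, you choose a splitting.
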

\begin{proof}
As $\boldsymbol{D}$ is a graded division ring, $\soc\boldsymbol{M}$
is a $\boldsymbol{D}$-vector space. Since $M$ is a $D$-module
via the unit we can find a morphism of $D$-modules
\begin{equation}\label{eq:socrealisation}
\bigvee_j\Sigma^{s(j)}D \lra M
\end{equation}
to realize an algebraic isomorphism
\begin{equation*}
\bigoplus_jD_{*-s(j)} \xrightarrow{\;\iso\;} \soc\boldsymbol{M}
                            \subseteq \boldsymbol{M}.
\end{equation*}
Now Remark~\ref{rem:soc-Dmod} implies that the morphism
of~\eqref{eq:socrealisation} is actually one of $A$-modules.
We set $\soc M=\bigvee_j\Sigma^{s(j)}D$.

Now we can repeat this on the cofibre $M/\soc M$ of the map
$\soc M\lra M$, obtaining $\soc(M/\soc M)\lra M/\soc M$. We
then define $\soc^2M$ using the right hand pullback square
in the diagram
\[
\xymatrix{
\soc M\ar[r]\ar[d]_= & \soc^2M\ar[d]\ar[r] & \soc(M/\soc M)\ar[d] \\
\soc M\ar[r] & M\ar[r] & M/\soc M
}
\]
from which we see by a standard diagram chase that
$\pi_*(\soc^2 M)\iso\soc^2\boldsymbol{M}$.
Continuing in this way we inductively build the socle tower
\[
*\ra \soc^1 M \lra \soc^2M\lra
                   \ldots \lra \soc^{e-1}M\lra \soc^eM = M,
\]
using pullback squares
\[
\xymatrix{
\soc^{k+1}M\ar[d]\ar[r] & \soc(M/\soc^k M)\ar[d] \\
M\ar[r] & M/\soc^k M
}
\]
for each $k$. These satisfy
\[
\pi_*(\soc^k M) = \soc^k\boldsymbol{M}.
\qedhere
\]
\end{proof}

An important consequence of this construction is that
there is a minimal $k_0$ for which $\soc^{k_0}M = M$,
so since $\soc^{k_0-1}\boldsymbol{M}\neq\boldsymbol{M}$,
using the fibre sequence
\begin{equation}\label{eq:M->M''}
\soc^{k_0-1}M \lra M \lra M/\soc^{k_0-1}M,
\end{equation}
we obtain $\pi_*(M/\soc^{k_0-1}M)\neq0$.

\begin{lem}\label{lem:Non-triv-D}
The $A$-module $D$ satisfies $\pi_*(D\wedge_AD)\neq0$.
\end{lem}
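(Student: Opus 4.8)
The plan is to show that the $A$-module $D$ cannot be built up from finitely many suspensions of $A$ via cofibre sequences, and then contrast this with what Theorem~\ref{thm:TopSocle} (applied to $M=D$) would force if $D\wedge_A D$ were $\pi_*$-acyclic. First I would record the algebraic analogue: $\boldsymbol{D}=\pi_*D$ is the unique simple $\boldsymbol{A}$-module, and since $\boldsymbol{A}$ is Artinian local but not a division ring (because $\rad(\boldsymbol{A})^{e-1}\neq 0$), Proposition~\ref{prop:Artin-dim} gives $\projdim_{\boldsymbol{A}}\boldsymbol{D}=\infty$, whence $\Tor^{\boldsymbol{A}}_s(\boldsymbol{D},\boldsymbol{D})\neq 0$ for infinitely many $s$ (in the graded, $2$-periodic sense of Remark~\ref{rem:gradings}).

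Next I would translate this into topology. Suppose for contradiction that $\pi_*(D\wedge_A D)=0$, i.e.\ that $D$ is \emph{not} faithful as an $A$-module witnessed by $M=D$ itself. Apply the socle tower of Theorem~\ref{thm:TopSocle} to $M=D$: there is a minimal $k_0\leq e$ with $\soc^{k_0}D\simeq D$, and since $\soc^{k_0-1}\boldsymbol{D}\neq\boldsymbol{D}$ the cofibre $D/\soc^{k_0-1}D$ has nonzero homotopy, as recorded in~\eqref{eq:M->M''}. Each stage of the tower is built from wedges of suspensions of $D$; smashing the whole finite tower with $D$ over $A$ and using that $D\wedge_A D\simeq *$ kills every layer, I would conclude inductively that $D\wedge_A \soc^k D\simeq *$ for all $k$, hence $D\wedge_A D=D\wedge_A\soc^{k_0}D\simeq *$, which is consistent — so a direct "smash the tower" argument is circular and not the right move.

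The key step, and the main obstacle, is therefore to extract a genuine contradiction rather than a tautology; I would do this by comparing the socle tower of $D$ with a \emph{minimal free resolution realised topologically}. Concretely: realise the algebraic minimal resolution of $\boldsymbol{D}$ over $\boldsymbol{A}$ by a tower of $A$-modules $P_\bullet$ with $\pi_*P_s$ free on generators in degrees matching the resolution, and observe that $\pi_*(D\wedge_A P_s)\cong\boldsymbol{D}\otimes_{\boldsymbol{A}}\pi_*P_s$ is the $s$-th term of the complex computing $\Tor^{\boldsymbol{A}}_*(\boldsymbol{D},\boldsymbol{D})$. Since minimality of the resolution forces all the differentials in this complex to vanish, we get $\pi_*(D\wedge_A D)\supseteq$ a copy of $\Tor^{\boldsymbol{A}}_0(\boldsymbol{D},\boldsymbol{D})=\boldsymbol{D}\neq 0$ already at the bottom of the resolution — indeed there is a unit $D\simeq D\wedge_A A\to D\wedge_A D$ whose composite with $D\wedge_A D\to D\wedge_A D/\soc^0 D=D\wedge_A D$ is nonzero on $\pi_0$, because on homotopy it is the map $\boldsymbol{D}=\boldsymbol{D}\otimes_{\boldsymbol{A}}\boldsymbol{A}\to\boldsymbol{D}\otimes_{\boldsymbol{A}}\boldsymbol{D}$ induced by $\epsilon$, which is surjective. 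Hence $\pi_0(D\wedge_A D)\neq 0$, contradicting the assumption; in fact this last paragraph shows directly, with no contradiction hypothesis, that the unit map $\eta\wedge_A\id\colon D\simeq D\wedge_A A\to D\wedge_A D$ is nonzero on $\pi_*$, since $\epsilon\colon\boldsymbol{A}\to\boldsymbol{D}$ induces a split surjection $\boldsymbol{D}=\boldsymbol{D}\otimes_{\boldsymbol{A}}\boldsymbol{A}\twoheadrightarrow\boldsymbol{D}\otimes_{\boldsymbol{A}}\boldsymbol{D}$ after applying $\boldsymbol{D}\otimes_{\boldsymbol{A}}-$. Therefore $\pi_*(D\wedge_A D)\neq 0$, as claimed.
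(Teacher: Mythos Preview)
Your proposal takes an enormous detour---through projective dimension, a socle-tower argument you yourself flag as circular, and a minimal-resolution sketch---before landing in the final paragraph on the only relevant idea: the map $D\simeq D\wedge_A A\to D\wedge_A D$ induced by the augmentation $\epsilon\colon A\to D$. But precisely at that point there is a genuine gap. You assert that ``on homotopy it is the map $\boldsymbol{D}=\boldsymbol{D}\otimes_{\boldsymbol{A}}\boldsymbol{A}\to\boldsymbol{D}\otimes_{\boldsymbol{A}}\boldsymbol{D}$'', but $\pi_*(D\wedge_A D)$ is \emph{not} $\boldsymbol{D}\otimes_{\boldsymbol{A}}\boldsymbol{D}$ in general: there is a K\"unneth spectral sequence with $\mathrm{E}^2_{s,*}=\Tor^{\boldsymbol{A}}_{s,*}(\boldsymbol{D},\boldsymbol{D})$, and as you yourself noted earlier via Proposition~\ref{prop:Artin-dim}, the higher $\Tor$ groups do not vanish. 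Knowing that the algebraic map $\boldsymbol{D}\otimes_{\boldsymbol{A}}\boldsymbol{A}\to\boldsymbol{D}\otimes_{\boldsymbol{A}}\boldsymbol{D}$ is an isomorphism does not, by itself, tell you that the spectrum-level map is nonzero on $\pi_*$; you have not explained how to pass from $\pi_*(D\wedge_A D)$ to its $\Tor_0$ quotient. (A minor point: you label this map $\eta\wedge_A\id$, but $\eta$ goes from $D$ to $A$; you mean $\id\wedge_A\epsilon$.)

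The paper's proof is a one-line fix that stays entirely at the spectrum level. The $S$-algebra maps $D\xrightarrow{\eta}A\xrightarrow{\epsilon}D$ of diagram~\eqref{eq:D-A-D}, with $\epsilon\eta=\id_D$, induce by change of base ring a retraction diagram of left $D$-modules
\[
D\wedge_D D \longrightarrow D\wedge_A D \longrightarrow D\wedge_D D
\]
whose composite is the identity. Since $D\wedge_D D\simeq D$ has nonzero homotopy, so does $D\wedge_A D$. Your map $D\to D\wedge_A D$ is exactly one half of this retraction; what you are missing is the spectrum-level splitting $D\wedge_A D\to D\wedge_D D\simeq D$ (equivalently, the multiplication on $D$ as an $A$-algebra). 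With that splitting in hand the result is immediate, and none of the machinery in your first three paragraphs is needed.
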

\begin{proof}
There is a diagram of left $D$-modules induced from~\eqref{eq:D-A-D}
\[
\xymatrix{
D\wedge_DD \ar[rr]^{=}\ar[rd] && D\wedge_DD \\
& D\wedge_AD\ar[ru] &
}
\]
in which $D\wedge_DD\iso D$. On applying $\pi_*(-)$
we see that $\pi_*(D\wedge_AD)\neq0$.
\end{proof}

\begin{thm}\label{thm:Non-triv}
Let $M$ be an $A$-module for which $\pi_*M\neq0$. Then
$\pi_*(D\wedge_AM)\neq0$, \ie, $D$ is a faithful $A$-module.
\end{thm}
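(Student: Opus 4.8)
The plan is to reduce the statement to the purely algebraic Lemma~\ref{lem:ArtinAlg&mod} by means of the topological socle tower constructed in Theorem~\ref{thm:TopSocle}. Suppose, for a contradiction, that $\pi_*M\neq0$ but $\pi_*(D\wedge_AM)=0$. First I would consider the socle tower
\[
*\ra \soc^1 M \lra \soc^2 M \lra \ldots \lra \soc^{e} M = M
\]
and let $k_0$ be the minimal value with $\soc^{k_0}M = M$; as noted after Theorem~\ref{thm:TopSocle}, the fibre sequence~\eqref{eq:M->M''} then gives a nonzero module $M' = M/\soc^{k_0-1}M$ with $\pi_*M'\neq0$ and, by construction of the socle tower, $\soc M' = M'$, \ie\ $M'$ is (after smashing) built from copies of suspensions of $D$.

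Next I would exploit the fact that $\soc M'$ is, topologically, a wedge $\bigvee_j\Sigma^{s(j)}D$. Applying $D\wedge_A(-)$ to the map $\soc^{k_0-1}M\lra M\lra M'$ and using that smashing with $D$ over $A$ is exact (it is a triangle of $D$-modules), the vanishing of $\pi_*(D\wedge_AM)$ forces control on $\pi_*(D\wedge_AM')$; more directly, since $M'\simeq\bigvee_j\Sigma^{s(j)}D$ as an $A$-module, we get
\[
D\wedge_AM' \;\simeq\; \bigvee_j\Sigma^{s(j)}(D\wedge_AD),
\]
and by Lemma~\ref{lem:Non-triv-D} each summand $D\wedge_AD$ has nonzero homotopy. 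Hence $\pi_*(D\wedge_AM')\neq0$. On the other hand, the cofibre sequence $D\wedge_A\soc^{k_0-1}M\to D\wedge_AM\to D\wedge_AM'$ and the assumption $\pi_*(D\wedge_AM)=0$ would give $\pi_*(D\wedge_AM')\cong\pi_*\Sigma(D\wedge_A\soc^{k_0-1}M)$; iterating down the socle tower (or applying the same argument to each successive quotient $\soc^{k}M/\soc^{k-1}M$, each of which is a wedge of suspensions of $D$) shows that $\pi_*(D\wedge_AM)=0$ propagates to every stage and in particular forces $\pi_*(D\wedge_AD)=0$, contradicting Lemma~\ref{lem:Non-triv-D}.

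The main obstacle I anticipate is bookkeeping with the tower rather than any conceptual difficulty: one must check that smashing with $D$ over $A$ genuinely takes each cofibre sequence $\soc^{k-1}M\to\soc^kM\to \soc^kM/\soc^{k-1}M$ to a cofibre sequence of $D$-modules (this is automatic since $-\wedge_AD$ is exact), and that the identification $\soc^kM/\soc^{k-1}M\simeq\bigvee\Sigma^{?}D$ as an $A$-module is the one coming from Theorem~\ref{thm:TopSocle}, so that Lemma~\ref{lem:Non-triv-D} applies summand-wise. Once those points are in place, a downward induction on $k$ from $k_0$ to $1$, using $\soc^1M = \soc M$ as the base case, completes the argument: either some quotient has nonzero $D$-homology (done), or we reach $\soc M = \bigvee\Sigma^{s(j)}D$ with $\pi_*M\neq0$, whence $\pi_*(D\wedge_AM)\supseteq$ a nonzero summand $\pi_*(D\wedge_AD)$, again a contradiction. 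Alternatively, and perhaps more cleanly, one can phrase the whole thing without contradiction: $\pi_*(D\wedge_AM)=\boldsymbol{D}\otimes_{\boldsymbol{A}}^{\mathbf{L}}\boldsymbol{M}$ in general need not reduce to $\boldsymbol{D}\otimes_{\boldsymbol{A}}\boldsymbol{M}$, so the honest route really is through the socle tower and Lemma~\ref{lem:ArtinAlg&mod} applied to $\pi_*M$ itself once one knows $\boldsymbol{D}\otimes_{\boldsymbol{A}}\boldsymbol{M}$ injects into $\pi_*(D\wedge_AM)$.
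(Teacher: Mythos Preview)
Your plan correctly identifies the relevant inputs --- the cofibre sequence $\soc^{k_0-1}M \to M \to M''$ with $M''$ a nonzero wedge of suspensions of $D$, and Lemma~\ref{lem:Non-triv-D} giving $\pi_*(D\wedge_A M'')\neq0$ --- but the inductive step does not close. From the long exact sequence for $D\wedge_A \soc^{k_0-1}M\to D\wedge_A M\to D\wedge_A M''$ together with the assumption $\pi_*(D\wedge_A M)=0$, you correctly obtain $\pi_*(D\wedge_A M'')\cong\pi_{*-1}(D\wedge_A \soc^{k_0-1}M)$; but this tells you $\pi_*(D\wedge_A \soc^{k_0-1}M)\neq0$, which is the \emph{wrong} direction for your downward induction. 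Knowing that every subquotient $\soc^k M/\soc^{k-1}M$ has nonzero $D\wedge_A(-)$ does not preclude cancellation in the long exact sequences gluing them together, so there is no mechanism by which ``$\pi_*(D\wedge_A M)=0$ propagates to every stage'' or forces $\pi_*(D\wedge_A D)=0$. Your closing suggestion, that $\boldsymbol{D}\otimes_{\boldsymbol{A}}\boldsymbol{M}$ injects into $\pi_*(D\wedge_A M)$, is precisely the point at issue: it is the edge of the K\"unneth spectral sequence and could a~priori receive differentials.

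The paper supplies the missing idea by exploiting the augmented structure~\eqref{eq:D-A-D} rather than iterating. Every $A$-module $N$ is a $D$-module via $\eta$, giving a natural map $N\simeq D\wedge_D N\to D\wedge_A N$; and because the $A$-action on $M''$ factors through $\epsilon$, there is in addition a retraction $D\wedge_A M''\to D\wedge_D M''\simeq M''$. Now take a nonzero $x\in\pi_*M''=\pi_*(D\wedge_D M'')$, lift it to $\pi_*M$ using the surjection of~\eqref{eq:M-fibseq-htpy}, and push it into $\pi_*(D\wedge_A M)$. Chasing across to $\pi_*(D\wedge_A M'')$ and down through the retraction returns~$x$, so the image in $\pi_*(D\wedge_A M)$ cannot be zero. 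No induction on the socle length is needed; the single top layer already suffices once you use the comparison between $\wedge_D$ and $\wedge_A$.
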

\begin{proof}
Using the socle series we can find a fibration sequence
as in~\eqref{eq:M->M''},
\begin{equation}\label{eq:M-fibseq}
M'\lra M \lra M'',
\end{equation}
where $\boldsymbol{M}''=\pi_*M''\neq0$, $J\boldsymbol{M}''=0$
and there is a short exact sequence
\begin{equation}\label{eq:M-fibseq-htpy}
0\ra\pi_*(M')\lra \pi_*(M) \lra \pi_*(M'')\ra0.
\end{equation}
As remarked in the proof of Theorem~\ref{thm:TopSocle}, $M''$
is weakly equivalent to a wedge of copies of suspensions of
the $A$-module $D$. So $\pi_*(M'')$ is a direct sum of copies
of suspensions of $\pi_*(D)$, hence by Lemma~\ref{lem:Non-triv-D},
$\pi_*(M'')\neq0$. The fibre sequence~\eqref{eq:M-fibseq}
induces a commutative diagram
\[
\xymatrix{
0\ar[r] &\pi_*(D\wedge_DM')\ar[r]\ar[d] & \pi_*(D\wedge_DM)\ar@{->>}[r]\ar[d]
                             & \pi_*(D\wedge_DM'')\ar[d]\ar@/^40pt/[dd]^{=} & \\
&\pi_*(D\wedge_AM')\ar[r] & \pi_*(D\wedge_AM)\ar@{->>}[r]
                            & \pi_*(D\wedge_AM'')\ar[d] & \\
                            &&&\pi_*(D\wedge_DM'')&
}
\]
in which a non-zero element $x\in\pi_*(D\wedge_DM'')$ lifts
to $\pi_*(D\wedge_DM)$ and so is in the image of composition
passing through $\pi_*(D\wedge_AM)$. Therefore
$\pi_*(D\wedge_AM)\neq0$.
\end{proof}

\section{Lubin-Tate cohomology of classifying spaces}
\label{sec:E^*BG}

We will denote by $E$ any Lubin-Tate spectrum such as $E_n$ or
$\Enr_n$, and then $K$ will denote the corresponding version of
Morava $K$-theory see~\cite{AB&BR:Enr} for details. The spectrum
$E$ is a commutative $S$-algebra, while $K$ is an $E$-algebra in
the sense of~\cite{EKMM}. The homotopy groups $\pi_*E$ and $\pi_*K$
are $2$-periodic and $\pi_0E$ is Noetherian; $\pi_0K$ is a field,
although $K$ is only homotopy commutative if~$p$ is an odd prime,
while when $p=2$ it is not even that. Nevertheless, we will view
$K$ as a kind of `topological division ring'.

The following lemma will allows us in certain circumstances to
relate modules over $E^{BG}=F(BG_+,E)$ to modules over
$K^{BG}=F(BG_+,K)$.
\begin{lem}\label{lem:E-K}
For any $E^{BG}$-module $M$, there is isomorphism of $K$-modules
\begin{equation*}
K\wedge_{E^{BG}} M
      \iso (K\wedge_E E)\wedge_{K\wedge_E E^{BG}}(K\wedge_E M).
\end{equation*}
In particular, there is an isomorphism of $K$-modules
\[
K\wedge_{E^{BG}} E \iso K\wedge_{K^{BG}} K.
\]
\end{lem}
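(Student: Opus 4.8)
The plan is to build the general isomorphism by a sequence of standard base-change (extension of scalars) manipulations for smash products of module spectra over $S$-algebras, and then deduce the special case by choosing $M = E$.

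First I would rewrite the source of the claimed isomorphism. For any $E^{BG}$-module $M$ we have, since $K$ is an $E$-algebra and $E^{BG}$ is an $E$-algebra via the unit $\epsilon\colon E^{BG}\to E$ precomposed with... actually more simply: $E^{BG} = F(BG_+,E)$ is an $E$-algebra via the map $E \to F(BG_+,E)$ induced by $BG_+ \to S^0$, so $K\wedge_E E^{BG}$ is a $K$-algebra, often written $K^{BG}$. The key formal input is the associativity/base-change equivalence
\[
K\wedge_{E^{BG}} M \;\simeq\; (K\wedge_E E^{BG})\wedge_{E^{BG}} M,
\]
valid because $K\wedge_{E^{BG}}M$ can be computed as $(K\wedge_E E^{BG})\wedge_{E^{BG}} M$ using that smashing over $E$ followed by smashing over $E^{BG}$ agrees with smashing over $E^{BG}$ after extending $K$ to a $K\wedge_E E^{BG}$-module; this is the standard iterated bar-construction identity in $\mathscr{D}_{E^{BG}}$. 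Then I would further rewrite
\[
(K\wedge_E E^{BG})\wedge_{E^{BG}} M \;\simeq\; (K\wedge_E E^{BG})\wedge_{K\wedge_E E^{BG}} (K\wedge_E M),
\]
again by the same extension-of-scalars principle applied to the ring map $E^{BG}\to K\wedge_E E^{BG}$ and the $E^{BG}$-module $M$: extending $M$ along $E^{BG}\to K\wedge_E E^{BG}$ gives $(K\wedge_E E^{BG})\wedge_{E^{BG}} M$, and since $K\wedge_E E^{BG}$ is itself obtained from $E^{BG}$ by extending scalars along $E\to K$, this module is naturally $(K\wedge_E E^{BG})\wedge_{K\wedge_E E^{BG}}(K\wedge_E M)$. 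Finally $K\wedge_E E\simeq K$ as a $K$-algebra, so the right-hand side reads $(K\wedge_E E)\wedge_{K\wedge_E E^{BG}}(K\wedge_E M)$ as stated.

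For the ``in particular'' clause I would simply set $M = E$, viewed as an $E^{BG}$-module via the augmentation $\epsilon\colon E^{BG}\to E$ (this is the module appearing in \eqref{eq:En-cochains}). Then the left-hand side becomes $K\wedge_{E^{BG}} E$, while on the right $K\wedge_E M = K\wedge_E E \simeq K$ and $K\wedge_E E^{BG} = K^{BG} = F(BG_+,K)$ (this last identification of $K\wedge_E F(BG_+,E)$ with $F(BG_+,K)$ uses that $BG_+$ is a finite-type spectrum, or more precisely that $K$ is built from $E$ by killing a finitely generated ideal, so smashing commutes with the relevant function spectrum). So the right-hand side is $K\wedge_{K^{BG}} K$, as required.

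The main obstacle I anticipate is not the formal base-change juggling, which is routine in $\mathscr{D}_{E^{BG}}$ and $\mathscr{D}_K$, but rather justifying the identification $K\wedge_E F(BG_+,E)\simeq F(BG_+,K)$, i.e.\ that smashing with $K$ over $E$ passes inside the function spectrum $F(BG_+,-)$. Since $BG_+$ is not a finite spectrum this requires an argument: one uses that $K$ is a finite cell $E$-module (or that $K_*$ is obtained from $E_*$ by quotienting out a finitely generated regular sequence), so $K\wedge_E(-)$ preserves the relevant (co)limits and the natural map $K\wedge_E F(BG_+,E)\to F(BG_+,K\wedge_E E)=F(BG_+,K)$ is an equivalence. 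All other steps are instances of the associativity isomorphism $C\wedge_B(B\wedge_A M)\simeq C\wedge_A M$ for $S$-algebra maps $A\to B\to C$, applied with $A = E^{BG}$ or $A=E$, which holds in the derived category by \cite[chapter~III]{EKMM}.
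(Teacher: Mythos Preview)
Your first displayed base-change step is incorrect. You claim
\[
K\wedge_{E^{BG}} M \;\simeq\; (K\wedge_E E^{BG})\wedge_{E^{BG}} M,
\]
but the right-hand side simplifies by associativity to $K\wedge_E(E^{BG}\wedge_{E^{BG}}M)\simeq K\wedge_E M$, where $M$ is regarded as an $E$-module by restriction along the \emph{unit} $E\to E^{BG}$. So your step~1 asserts $K\wedge_{E^{BG}}M\simeq K\wedge_E M$, which is false in general: already for $M=E$ (with its $E^{BG}$-structure via the augmentation) one has $K\wedge_E E\simeq K$, whereas $K\wedge_{E^{BG}}E$ has nontrivial odd-degree homotopy by the K\"unneth computation in the proof of Theorem~\ref{thm:E^BCp^r-ramified}. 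The underlying confusion is that $K$ as a right $E^{BG}$-module (via $E^{BG}\to E\to K$) is not the same as $K\wedge_E E^{BG}=K^{BG}$ as a right $E^{BG}$-module; the associativity identity $C\wedge_A M\simeq C\wedge_B(B\wedge_A M)$ requires a factorization $A\to B\to C$, and there is no ring map $E^{BG}\to E$ that factors through the unit $E\to E^{BG}$.

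The fix is to factor $E^{BG}\to K$ correctly. The paper uses $E^{BG}\to E\to K$ to write $K\wedge_{E^{BG}}M\simeq K\wedge_E(E\wedge_{E^{BG}}M)$ and then invokes the base-change identity from \cite[III.3.10]{EKMM}. Equivalently, one can factor through $E^{BG}\to K^{BG}\to K$ (the second map being $K\wedge_E(-)$ applied to the augmentation) to get
\[
K\wedge_{E^{BG}} M \;\simeq\; K\wedge_{K^{BG}}\bigl(K^{BG}\wedge_{E^{BG}} M\bigr)
\;\simeq\; (K\wedge_E E)\wedge_{K\wedge_E E^{BG}}(K\wedge_E M),
\]
where the last step is exactly your (correct) step~2 identification $K^{BG}\wedge_{E^{BG}}M\simeq K\wedge_E M$. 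Your remarks on the ``in particular'' clause and on why $K\wedge_E F(BG_+,E)\simeq F(BG_+,K)$ (namely that $K$ is a finite cell $E$-module) are fine and match the paper's reasoning.
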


\begin{proof}
This follows from an obvious generalization
of~\cite[proposition~III.3.10]{EKMM}. Since there are isomorphisms
of $E$-algebras $K\iso K\wedge_E E$ and $K^{BG}\iso K\wedge_E E^{BG}$,
for any $E^{BG}$-module $M$,
\begin{align*}
K\wedge_{E^{BG}} M &\iso K\wedge_E(E\wedge_{E^{BG}} M) \\
                   &\iso (K\wedge_K K)\wedge_E (E\wedge_{E^{BG}} M) \\
                   &\iso (K\wedge_E E)\wedge_{K\wedge_E E^{BG}}(K\wedge_E M).
\qedhere
\end{align*}
\end{proof}
\begin{rem}\label{rem:transfer}
By a standard argument making use of the Becker-Gottlieb
transfer~\cite{JB&DG:transfer}, after $p$-localization,
$\Sigma^\infty BG_+$ is a retract of $\Sigma^\infty BG'_+$
where $G'$ is any $p$-Sylow subgroup of~$G$. In particular,
when $p\nmid|G|$ we have
\[
F(BG_+,E) \sim E, \quad F(BG_+,K) \sim K.
\]
\end{rem}
\begin{thm}\label{thm:E-BG}
Let $G$ be a finite group.  \\
\emph{(a)} The $K$-cohomology $K^*(BG_+)$ is a finite dimensional
$K^*$-vector space and the $E$-cohomology $E^*(BG_+)$ is a finitely
generated $E^*$-module. \\
\emph{(b)} If $K^*(BG_+)$ is concentrated in even degrees, then
$E^*(BG_+)$ is a free $E^*$-module of finite rank and
\[
K^*(BG_+) = K^*\otimes_{E^*}E^*(BG_+)
          = E^*(BG_+)/\mathfrak{m}E^*(BG_+).
\]
\emph{(c)} $K^*(BG_+)$ is an augmented Artinian local\/ $K^*$-algebra
whose maximal ideal is nilpotent. Hence $E^*(BG_+)$ is an augmented
pro-Artinian local $E^*$-algebra,
\[
E^*(BG_+) = \lim_r E^*(BG_+)/\mathfrak{m}^rE^*(BG_+).
\]
\end{thm}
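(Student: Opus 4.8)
The plan is to prove the three parts of Theorem~\ref{thm:E-BG} in sequence, using the Atiyah--Hirzebruch spectral sequence (AHSS) and the Ravenel--Wilson computation of $K^*(BG_+)$ as the main inputs.

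For part~(a), I would start from the fact that $K^*(BG_+)$ is finite dimensional over $K^*$: this is a theorem of Ravenel--Wilson (finiteness) together with the general nilpotence/finiteness results for $K(n)^*$ of finite groups; since $BG$ has finitely generated (co)homology in each degree and $K^*$ is a field, the AHSS $H^*(BG;K^*)\Rightarrow K^*(BG_+)$ has finite-dimensional $E_2$-page in each internal degree and collapses to something finite dimensional after enough pages (the key point being that the answer is bounded, due to the even-degree collapse when $K^*(BG_+)$ is concentrated in even degrees, or the general Ravenel--Wilson bound otherwise). For the $E$-cohomology statement, I would run the AHSS $H^*(BG;E^*)\Rightarrow E^*(BG_+)$ and observe that since $E^* $ is Noetherian and $E^0$ is a complete local ring with residue field $K^0$, a Nakayama/derived-completeness argument reduces finite generation of $E^*(BG_+)$ over $E^*$ to finite dimensionality of $K^*(BG_+)$ over $K^*$; concretely, one uses that $E$ is $K$-locally dualizable-like and that $K\wedge_E E^{BG}\simeq K^{BG}$ (already available via Lemma~\ref{lem:E-K}, or directly), so $\pi_*(K\wedge_E E^{BG})=K^*(BG_+)$ is finite, hence by the complete Nakayama lemma $E^*(BG_+)$ is finitely generated.

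For part~(b), assume $K^*(BG_+)$ is concentrated in even degrees. Then the AHSS for $E^*(BG_+)$ also collapses (the differentials land in odd total degree relative to a class in even degree, and one propagates evenness from the $K$-theory statement up to $E$-theory via the map $E\to K$ and a comparison of spectral sequences, or via the fact that $E^*(BG_+)$ is built from even pieces by the pro-Artinian structure in~(c)). Flatness: since $K^*(BG_+)=\pi_*(K\wedge_E E^{BG})$ is concentrated in even degrees and free over the (even, periodic) field $K^*$, the universal coefficient / base-change spectral sequence $\Tor^{E^*}_{*,*}(K^*, E^*(BG_+))\Rightarrow K_*(K\wedge_E E^{BG})$ degenerates and forces $\Tor^{E^*}_{>0}(K^*, E^*(BG_+))=0$; combined with finite generation from~(a) and $E^0$ being local, this gives freeness of finite rank, and the displayed identifications $K^*(BG_+)=K^*\otimes_{E^*}E^*(BG_+)=E^*(BG_+)/\mathfrak{m}E^*(BG_+)$ are then immediate ($\mathfrak{m}$ the maximal ideal of $E^*$, i.e.\ the kernel of $E^*\to K^*$).

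For part~(c), I would argue that $K^*(BG_+)$ is augmented over $K^*$ via the map $BG_+\to S^0$ (the basepoint-disjoint collapse), which splits the unit $K^*\to K^*(BG_+)$; being a finite-dimensional commutative (graded-commutative, and honestly commutative after the $2$-periodic/$\Z/2$-graded reinterpretation of Remark~\ref{rem:gradings}) algebra over a field, it is Artinian, and the augmentation ideal is nilpotent because any prime of a finite-dimensional algebra over a field is maximal and the Jacobson radical of an Artinian ring is nilpotent — but I still need \emph{local}, i.e.\ that the augmentation ideal is the \emph{unique} maximal ideal. This is where the main obstacle lies: I expect to invoke the transfer/Sylow reduction of Remark~\ref{rem:transfer} to reduce to $G$ a $p$-group, and then use that for a $p$-group $K^*(BG_+)$ is connected/local — e.g.\ because $BG$ is $K(n)$-locally built from a point by iterated extensions along $BC_p$'s, and $K^*(BC_p)$ is a truncated polynomial algebra $K^*[x]/(x^{p^n})$, hence local, so an induction on the order of the $p$-group using the bar/Serre spectral sequence for central extensions keeps locality. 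For a general $G$, the transfer argument makes $K^*(BG_+)$ a retract of $K^*(BG'_+)$ with $G'$ a $p$-Sylow, but a retract of a local ring need not be local, so I would instead note that $K^*(BG_+)\cong (K^*(BG'_+))^{\text{stable}}$ sits inside the $p$-group computation compatibly with augmentations and that the idempotents of $K^*(BG_+)$ correspond to $K(n)$-local splittings of $BG_+$, which for a finite group are controlled by $p$-subgroups and conjugacy (Hopkins--Kuhn--Ravenel character theory), forcing the only idempotents to be $0$ and $1$ — equivalently, $\Spec$ of $K^0(BG)$ is connected. Finally, the pro-Artinian local statement for $E$ follows formally: $E^*(BG_+)=\lim_r E^*(BG_+)/\mathfrak{m}^r E^*(BG_+)$ by completeness of $E^*$ together with finite generation from~(a) (so the $\mathfrak{m}$-adic topology is complete and each quotient is Artinian since it is finite over the Artinian ring $E^*/\mathfrak{m}^r$), the augmentation is again $BG_+\to S^0$, and locality of each finite quotient follows from locality of $K^*(BG_+)=E^*(BG_+)/\mathfrak{m}E^*(BG_+)$ by the standard fact that a ring is local iff its quotient by a nilpotent (here nilpotent-in-the-limit) ideal is.
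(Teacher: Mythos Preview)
Your outline is broadly correct and, for part~(c), takes essentially the same route as the paper: reduce to a $p$-Sylow subgroup via the transfer (Remark~\ref{rem:transfer}), handle $C_{p^r}$ directly since $K^*({BC_{p^r}}_+)=K^*[y]/(y^{p^{rn}})$ is visibly local, and then induct on the order of a $p$-group using the Serre spectral sequence. For parts~(a) and~(b) the paper simply cites \cite{HKR:K*BG,HKR:GenChars} and \cite[proposition~2.5]{MH&NS}; your sketches (AHSS plus a complete-Nakayama argument for~(a), and a $\Tor$-vanishing/base-change argument for~(b)) are reasonable expansions of what lies behind those references, though to make them airtight you would need to invoke the $L$-completeness of $E^*(BG_+)$ explicitly for the Nakayama step.

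The one place you take an unnecessary detour is the reduction to $p$-groups in~(c). You worry that a retract of a local ring need not be local and therefore reach for HKR character theory to control idempotents. That machinery is not needed. The restriction map $\res\colon K^*(BG_+)\to K^*(BG'_+)$ is an \emph{injective algebra} homomorphism (the transfer splits it as a $K^*$-module map, since $[G:G']$ is prime to~$p$ and hence a unit in $K^*$), and it is compatible with the two augmentations to $K^*$. Hence the augmentation ideal $I\subseteq K^*(BG_+)$ maps injectively into the augmentation ideal $I'\subseteq K^*(BG'_+)$, and nilpotence of $I'$ immediately forces nilpotence of $I$. Since $K^*(BG_+)/I\cong K^*$ is a graded field and $I$ is nilpotent, $I$ is the unique maximal (graded) ideal. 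That is all the paper's one-line ``reduce to the $p$-group case'' amounts to. A minor point: the paper runs the $p$-group induction via a normal subgroup $N\lhd G$ of index~$p$ and the fibration $BN\to BG\to BC_p$ (following Ravenel~\cite{DCR:K^*BG}), rather than via a central $C_p$ as you suggest; either orientation works.
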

\begin{proof}
(a) See~\cite{HKR:K*BG,HKR:GenChars} for example. \\
(b) See~\cite[proposition~2.5]{MH&NS}. \\
(c) Following Remark~\ref{rem:transfer}, we can reduce
to the case where $G$ is a $p$-group using the transfer
associated with a $p$-Sylow subgroup $G'\leq G$. The
case of a cyclic $p$-group $C_{p^r}$ is well known and
\[
K^*({BC_{p^r}}_+) = K^*[y]/(y^{p^r}).
\]
The case of a general $p$-group $G$ of order $p^m$ follows
by induction on $m$ since there is always a normal subgroup
$N\ideal G$ of index~$p$ and this permits an argument with
the Serre spectral sequence associated with the fibration
\[
BN \lra BG \lra BC_p
\]
as used in~\cite{DCR:K^*BG} to calculate $K^*(BG_+)$ from
knowledge of $K^*(BN_+)$ as input.
\end{proof}

It is known that $K^*(BG_+)$ need not be concentrated in
even degrees~\cite{IK&KL}.

We are interested in the $E$-algebras $E^{BG}=F(BG_+,E)$
and $K^{BG}=F(BG_+,K)$, each of which is $K$-local. Of
course the diagonal $BG\lra BG\times BG$ induces the
product on each of these, but only $E^{BG}$ is strictly
commutative, while $K^{BG}$ is homotopy commutative when
$p\neq2$ and merely associative when $p=2$. At the level
of homotopy groups, $E^*(BG_+)=\pi_*(E^{BG})$ and
$K^*(BG_+)=\pi_*(K^{BG})$ are both graded commutative.

Now we can apply our earlier results to give
\begin{thm}\label{thm:faithfulness-EBG}
For any finite group $G$, $E$ and $K$ are faithful
$E^{BG}$-modules in the $K$-local category.
\end{thm}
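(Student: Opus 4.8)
The plan is to reduce the faithfulness of $E$ and $K$ as $E^{BG}$-modules to the abstract faithfulness statement already proved in Theorem~\ref{thm:Non-triv}, applied to the topological socle machinery of Section~\ref{sec:Top-Socle}. First I would treat $K$. We want to take $D = K^{BG}$(?)---no: we want $D$ to be a ``topological division ring'' sitting under and over the $S$-algebra whose homotopy is an Artinian local augmented algebra. The correct setup is to let $A = K^{BG} = F(BG_+, K)$ and $D = K = F(EG_+,K)$. By Theorem~\ref{thm:E-BG}(c), $\boldsymbol{A} = K^*(BG_+)$ is an augmented Artinian local $K^*$-algebra with nilpotent maximal ideal, and $\boldsymbol{D} = K^* = \pi_*K$ is a (graded) division ring with $\pi_1 K = 0$ and $2$-periodic homotopy; the diagram~\eqref{eq:D-A-D} is supplied by the unit $BG_+ \to S^0$... wait, more precisely by the maps induced from $EG \to \ast$ and $\ast \to EG$ at the level of function spectra, giving $K \to K^{BG} \to F(EG_+,K) \sim K$ with composite the identity. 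Then Theorem~\ref{thm:Non-triv} immediately gives that $D = K$ is a faithful $A = K^{BG}$-module in $\mathscr{D}_{K^{BG}}$.

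Next I would pass to $E$. Here set $A = E^{BG} = F(BG_+,E)$ and $D = E \sim F(EG_+,E)$, with the analogous diagram~\eqref{eq:D-A-D}. The subtlety is that $\boldsymbol{A} = E^*(BG_+)$ is only \emph{pro-}Artinian local, not Artinian, so Theorem~\ref{thm:Non-triv} does not apply directly. I would bridge this gap using Lemma~\ref{lem:E-K}: for any $E^{BG}$-module $M$ with $\pi_*M \neq 0$, I want to show $\pi_*(E \wedge_{E^{BG}} M) \neq 0$. By the $K$-local nature of everything in sight (both $E^{BG}$ and $E$ are $K$-local $E$-modules, as noted before Theorem~\ref{thm:faithfulness-EBG}), and using that $K$ is faithful as an $E$-module in the $K$-local category (a standard fact, e.g.\ since $K$ detects $K$-local triviality), it suffices to show $\pi_*(K \wedge_E (E \wedge_{E^{BG}} M)) \neq 0$, i.e.\ $\pi_*(K \wedge_{E^{BG}} M) \neq 0$, provided $K \wedge_{E^{BG}} M$ is not trivial. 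But by Lemma~\ref{lem:E-K},
\[
K \wedge_{E^{BG}} M \iso (K\wedge_E E)\wedge_{K\wedge_E E^{BG}}(K\wedge_E M) \iso K \wedge_{K^{BG}}(K\wedge_E M),
\]
and $K \wedge_E M$ is a $K^{BG} = K\wedge_E E^{BG}$-module which is non-trivial whenever $M$ is (again by $K$-faithfulness over $E$ in the $K$-local setting). So the non-triviality of $K \wedge_{E^{BG}} M$ reduces to the faithfulness of $K$ over $K^{BG}$ already established in the previous paragraph, and we are done for $E$ as well. The faithfulness of $K$ as an $E^{BG}$-module follows the same way since $K \wedge_{E^{BG}} M \iso K \wedge_{K^{BG}}(K\wedge_E M)$.

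The main obstacle I expect is the careful handling of ``$K$-local faithfulness'': one must be precise about in which category $E \wedge_{E^{BG}} M \sim \ast$ is being tested, and invoke the correct statement that $K_*(-)$ detects $K$-local triviality of $E$-module (or $S$-module) spectra, so that $\pi_*(K \wedge_E N) = 0$ forces $N \sim \ast$ in the $K$-local category. A secondary point requiring care is checking that the hypotheses of Section~\ref{sec:Top-Socle}---in particular that $\pi_*K$ behaves as a graded division ring with the stated periodicity and vanishing in odd degree, and that the augmentation ideal of $K^*(BG_+)$ is genuinely the Jacobson radical with $\rad^e = 0$---are all licensed by Theorem~\ref{thm:E-BG}(c); once that is in place, Theorem~\ref{thm:Non-triv} does the real work, and everything else is bookkeeping with Lemma~\ref{lem:E-K}.
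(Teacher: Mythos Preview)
Your proposal is correct and follows essentially the same route as the paper: set $A=K^{BG}$, $D=K$, verify via Theorem~\ref{thm:E-BG}(c) that the hypotheses of Section~\ref{sec:Top-Socle} hold, apply Theorem~\ref{thm:Non-triv} to get $K$ faithful over $K^{BG}$, and then use Lemma~\ref{lem:E-K} together with the identification $K\wedge_E E^{BG}\simeq K^{BG}$ (valid since $K$ is a finite cell $E$-module) and the fact that $K$ detects $K$-local triviality to lift this to faithfulness over $E^{BG}$. The only organizational difference is that the paper begins with ``it suffices to show that $K$ is faithful'' (using $K\wedge_{E^{BG}}M\iso K\wedge_E(E\wedge_{E^{BG}}M)$, so $K$ faithful forces $E$ faithful), whereas you treat $E$ first and then remark that $K$ follows the same way.
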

\begin{proof}
It suffices to show that $K$ is faithful. By Lemma~\ref{lem:E-K},
for any $E^{BG}$-module there is an isomorphism
\begin{align*}
K\wedge_{E^{BG}} M \iso (K\wedge_E E)\wedge_{K\wedge_E E^{BG}}(K\wedge_E M).
\end{align*}
The natural morphism of $E$-algebras
\begin{equation*}
K\wedge_E F(BG_+,E) \lra F(BG_+,K\wedge_E E)
\end{equation*}
is a weak equivalence since $K$ is a finite cell $E$-module,
so by \cite[theorem~III.4.2]{EKMM} it is enough to know that
\begin{equation*}
(K\wedge_E E)\wedge_{K^{BG}}(K\wedge_E M)
                \iso K\wedge_{K^{BG}}(K\wedge_E M) \nsim *.
\end{equation*}
If $M$ is $K$-local and non-trivial, then
$K\wedge_{K^{BG}}(K\wedge_E M) \nsim *$,
because we know from Theorem~\ref{thm:Non-triv} that $K$ is
faithful as a $K^{BG}$-module.
\end{proof}

\section{Galois theory and $E^{BG}$}\label{sec:E^BG&GaloisThy}

In this section we will consider extensions of the form
\[
E^{BG} = F(BG_+,E) \lra F(EG_+,E) \sim E
\]
with $G$ a finite group and consider whether or not they
are Galois. Since we know they are faithful, the issue is
whether such an extension satisfies the unramified condition
that the map
\[
\Theta\:F(BG_+,E)\wedge_{E^{BG}}F(BG_+,E) \lra F(G_+,E)
\]
is weak equivalence, and therefore there is a weak equivalence
\begin{equation}\label{eq:E^BG-unram}
E \wedge_{E^{BG}} E \xrightarrow{\;\sim\;} \prod_G E.
\end{equation}
In particular, this condition implies that $\pi_*(E\wedge_{E^{BG}}E)$
is concentrated in even degrees.

We begin by considering the case of cyclic $p$-groups $C_{p^r}$.
\begin{thm}\label{thm:E^BCp^r-ramified}
For each $r\geq1$, the extension
\[
E^{BC_{p^r}} = F({BC_{p^r}}_+,E) \lra F({EC_{p^r}}_+,E)
\]
is ramified and hence it is not\/ $C_{p^r}$-Galois.
\end{thm}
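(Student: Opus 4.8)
The plan is to show directly that the map $\Theta$ fails to be a weak equivalence by computing $\pi_*(E\wedge_{E^{BG}}E)$ for $G=C_{p^r}$ and exhibiting odd-degree classes, which by the remark preceding the theorem obstructs the unramified condition \eqref{eq:E^BG-unram}. By Theorem~\ref{thm:E-BG}(b)--(c) we know $E^*({BC_{p^r}}_+)$ is a free $E^*$-module and, at the level of homotopy, $E^*({BC_{p^r}}_+)=E^*[[y]]/([p^r](y))$ where $[p^r](y)$ is the $p^r$-series of the formal group law attached to $E$, a power series with unit leading coefficient on $y^{p^{rn}}$ after a suitable coordinate; the augmentation $\epsilon$ sends $y\mapsto 0$, and $F({EC_{p^r}}_+,E)\sim E$ realises this augmentation. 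So the problem reduces to the purely algebraic question of computing $\Tor^{E^*[[y]]/([p^r](y))}_{*,*}(E^*,E^*)$ and showing it is not concentrated in even internal degree.

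First I would set up the algebraic model: let $\boldsymbol{A}=E^*[[y]]/([p^r](y))$ with $|y|=-2$ (using the $2$-periodicity, so gradings are really $\Z/2$-graded as in Remark~\ref{rem:gradings}), and $\boldsymbol{D}=E^*$ the quotient by the augmentation ideal $\boldsymbol{J}=(y)$. Then $\boldsymbol{A}$ is not a complete intersection over $E^*$ in general, but it is a quotient of $E^*[[y]]$ by a single element, so there is a nice free resolution to write down. Concretely, $\boldsymbol{A}=R/(f)$ with $R=E^*[[y]]$ a regular (power series) ring and $f=[p^r](y)$; since $f$ is not a zero-divisor and is not a unit multiple of $y$ (it lies in $(y^2)$ once $r\geq1$ and $n\geq1$ — indeed $[p^r](y)\equiv p^r y \bmod (y^2)$ up to units, and $p$ is not invertible in $E^*$), the module $E^*=\boldsymbol{A}/(y)$ over $\boldsymbol{A}$ has a $2$-periodic-up-to-twist Tate-style resolution. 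The key computation is $\Tor_1$: one finds $\Tor_1^{\boldsymbol{A}}(E^*,E^*)$ contains the class dual to $f'(0)=[p^r]'(0)=p^r$ times a generator, sitting in internal degree $|y|=-2$ (even) but with homological degree $1$; more importantly, the higher $\Tor$ groups, produced by the self-extension coming from the relation $f\in(y)$, land in odd total degree. The cleanest route is probably: compute the Koszul-type complex for $\boldsymbol{A}=R/(f)$ over $R$, then base-change, to see that $\Tor^{\boldsymbol{A}}_*(E^*,E^*)$ has a class in homological degree $1$ and internal degree $|y|$, contributing to $\pi_{-2-1}=\pi_{-3}\cong\pi_1$ (odd) of $E\wedge_{E^{BG}}E$ via the Künneth/Tor spectral sequence $\Tor^{E^*({BC_{p^r}}_+)}_{*,*}(E^*,E^*)\Rightarrow\pi_*(E\wedge_{E^{BG}}E)$, which collapses since $E^*({BC_{p^r}}_+)$ is a free, hence flat, $E^*$-module and the whole picture is a free resolution over a regular quotient.

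Then I would conclude: if the extension were $C_{p^r}$-Galois, \eqref{eq:E^BG-unram} would force $\pi_*(E\wedge_{E^{BG}}E)\cong\prod_{C_{p^r}}E^*$, concentrated in even degrees; but the $\Tor$ computation above shows an odd-degree class survives (equivalently, $E^*$ has infinite projective dimension over $\boldsymbol{A}$, which we already knew from Proposition~\ref{prop:Artin-dim} since $K^*({BC_{p^r}}_+)=K^*[y]/(y^{p^r})$ is Artinian local and not a field, so $\boldsymbol{A}$ cannot be smooth and the Tor-algebra is genuinely infinite and spills into odd degrees). Faithfulness is already established by Theorem~\ref{thm:faithfulness-EBG}, so the only Galois axiom that can fail is the unramified one, and it does. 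The main obstacle I anticipate is bookkeeping with the internal ($\Z/2$) grading to be certain the offending $\Tor$ class is genuinely odd rather than accidentally even after periodicity — this amounts to checking that $[p^r](y)$, after any coordinate change, still reduces to an element of $(y^2)$ (equivalently $[p^r]'(0)=p^r$ is not a unit in $E^*$), which forces a nontrivial $\Tor_1$ in internal degree $|y|$ and hence a contribution in odd total degree; a secondary, more conceptual point is justifying that the Tor/Künneth spectral sequence for $E\wedge_{E^{BG}}E$ degenerates, which follows from freeness of $E^*({BC_{p^r}}_+)$ over $E^*$ together with the fact that one is resolving over a regular local (power series) ring.
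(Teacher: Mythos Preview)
Your approach is essentially the paper's: compute $\Tor^{(E^{BC_{p^r}})_*}_{*,*}(E_*,E_*)$ via the $2$-periodic (Tate/matrix-factorisation) resolution coming from the factorisation $[p^r](y)=y\cdot g(y)$ with $g(0)=p^r$, observe $\Tor_s=E_*/p^r$ for odd~$s$, and conclude that odd total-degree classes in the K\"unneth spectral sequence obstruct~\eqref{eq:E^BG-unram}. The paper carries this out explicitly using Weierstrass preparation to name $g(y)=\<p^r\>y$ up to a unit, which is exactly the ``Tate-style resolution'' you allude to; your Tor computation, while sketchy in places (the phrase ``class dual to $f'(0)=p^r$ times a generator'' is garbled---you mean $\Tor_1\cong E_*/p^r$), is correct in outline.

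The genuine gap is your justification for why the K\"unneth spectral sequence degenerates. Neither ``freeness of $E^*({BC_{p^r}}_+)$ over $E^*$'' nor ``resolving over a regular local power series ring'' implies that the spectral sequence collapses: freeness over $E^*$ only tells you the $\mathrm{E}^2$-page is as computed, and regularity of $E^*[[y]]$ is a statement about $\Tor$ over $R$, not over $\boldsymbol{A}=R/(f)$. You still have to rule out differentials hitting the odd-filtration classes. The paper's argument is clean and specific to the answer: since $\mathrm{E}^2_{s,*}=0$ for even $s>0$, the only possible differentials are $d^{2k-1}\:\mathrm{E}^{2k-1}_{2k-1,t}=E_t/p^r\lra\mathrm{E}^{2k-1}_{0,t+2k-2}=E_{t+2k-2}$, and these vanish because they are $E_*$-module maps from a torsion module to a torsion-free one. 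Without this (or an equivalent survival argument for the $\Tor_1$ class), you have not shown that $\pi_*(E\wedge_{E^{BC_{p^r}}}E)$ actually contains odd-degree elements. Also, a minor point: the paper takes $y$ in degree~$0$, not $-2$; this is harmless by $2$-periodicity but your degree bookkeeping should be adjusted accordingly.
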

\begin{proof}
We recall (see for example~\cite[lemma~5.1]{HKR:GenChars})
that
\begin{equation*}
(E^{BC_{p^r}})_* = E^*[[y]]/([p^r]y),
\end{equation*}
where $y\in(E^{BC_{p^r}})_0 = E^0({BC_{p^r}}_+)$ and the
$p$-series $[p]y$ has the form
\begin{equation*}
[p]y \equiv y^{p^{n}} \mod{\mathfrak{m}},
\end{equation*}
so for each $r\geq1$ the $p^r$-series is inductively
defined by
\begin{align*}
[p^r]y = [p]([p^{r-1}]y) &= p^ry + \cdots + y^{p^{rn}} + \cdots \\
 &\equiv y^{p^{rn}} \mod{\mathfrak{m}}.
\end{align*}
By the Weierstrass preparation theorem, there is a polynomial
\begin{equation*}
\<p^r\>y = p^r + \cdots + y^{p^{rn}-1}
         \equiv y^{p^{rn}-1} \mod{\mathfrak{m}}
\end{equation*}
for which
\[
[p^r]y = y\<p^r\>y(1 + y f_r(y)),
\]
where $f_r(y)\in E^*[[y]]$. Then we have
\begin{equation*}
(E^{BC_{p^r}})_* = E^*[[y]]/(y\<p^r\>y).
\end{equation*}

The $(E^{BC_{p^r}})_*$-module $E_*$ admits the periodic
minimal free resolution
\begin{equation}\label{eq:E^*Cpr-res}
0\la E_* \xleftarrow{\ph{\;y\;}} (E^{BC_{p^r}})_*
\xleftarrow{\;y\;}(E^{BC_{p^r}})_* \xleftarrow{\<p^r\>y}(E^{BC_{p^r}})_*
\xleftarrow{\;y\;}(E^{BC_{p^r}})_* \xleftarrow{\<p^r\>y}(E^{BC_{p^r}})_*
\xleftarrow{\ph{\;y\;}}\ldots,
\end{equation}
so $\Tor^{(E^{BC_{p^r}})_*}_{*,*}(E_*,E_*)$ is the homology
of the complex
\begin{multline*}
0\la E_*\otimes_{(E^{BC_{p^r}})_*}(E^{BC_{p^r}})_*
\xleftarrow{\;I\otimes y\;}E_*\otimes_{(E^{BC_{p^r}})_*}(E^{BC_{p^r}})_*
\xleftarrow{I\otimes \<p^r\>y}E_*\otimes_{(E^{BC_{p^r}})_*}(E^{BC_{p^r}})_* \\
\xleftarrow{\;I\otimes y\;}E_*\otimes_{(E^{BC_{p^r}})_*}(E^{BC_{p^r}})_*
\xleftarrow{I\otimes \<p^r\>y}E_*\otimes_{(E^{BC_{p^r}})_*}(E^{BC_{p^r}})_*
\xleftarrow{\ph{\;I\otimes y\;}}\ldots,
\end{multline*}
which is equivalent to
\begin{equation}\label{eq:E^*Cpr-Torcomplex}
0\la E_*\xleftarrow{\;0\;}E_*\xleftarrow{p^r}E_*
\xleftarrow{\;0\;}E_*\xleftarrow{p^r}E_*
\xleftarrow{\ph{\;0y\;}}\ldots.
\end{equation}
Since $E_*$ is torsion-free, for $s\geq0$ this gives
\begin{equation}\label{eq:E^*Cpr-Tor}
\Tor^{(E^{BC_{p^r}})_*}_{s,*}(E_*,E_*) =
\begin{cases}
\ph{abc} E_* & \text{if $s=0$}, \\
E_*/p^rE_*   & \text{if $s$ is odd}, \\
\ph{abc} 0   & \text{otherwise}.
\end{cases}
\end{equation}

Thus in the K\"unneth spectral sequence
\begin{equation}\label{eq:KSS-Cp^r}
\mathrm{E}^2_{s,t} = \Tor^{(E^{BC_{p^r}})_*}_{s,t}(E_*,E_*)
\Lra \pi_{s+t} (E \wedge_{E^{BC_{p^r}}} E)
\end{equation}
there can be no non-trivial differentials since for degree
reasons the only possibilities involve $E_*$-module
homomorphisms of the form
\[
d^{2k-1}\:\mathrm{E}^2_{2k-1,t} = E_t/p^rE_t
             \lra \mathrm{E}^2_{0,t+2k-2} = E_{t+2k-2},
\]
with torsion-free target. This shows that the odd degree
terms in $\pi_*(E\wedge_{E^{BC_{p^r}}}E)$ are not zero,
contradicting the unramified condition~\ref{eq:E^BG-unram}
for a Galois extension.
\end{proof}

\begin{rem}\label{rem:E^BCp^r-ramified}
If we work rationally, then the K\"unneth spectral sequence
\[
\mathrm{E}^2_{s,t}(C_{p^r};\Q)
   = \Tor^{((E^{BC_{p^r}})\Q)_*}_{s,t} (E_*\Q,E_*\Q)
   \Lra \pi_{s+t}(E\Q \wedge_{(E^{BC_{p^r}})\Q} E\Q)
\]
has $\mathrm{E}^2_{s,*}(C_p^r;\Q) = 0$ except when $s=0$,
giving
\[
\pi_*(E\Q \wedge_{(E^{BC_{p^r}})\Q} E\Q)
     = E_*\Q \otimes_{(E^{BC_{p^r}})_*\Q} E_*\Q.
\]
This shows that higher filtration terms in the
K\"unneth spectral sequence~\ref{eq:KSS-Cp^r}
contribute $p$-torsion.
\end{rem}

Now we extend Theorem~\ref{thm:E^BCp^r-ramified} to
arbitrary $p$-groups.
\begin{thm}\label{thm:E^BG-ramified}
Let $G$ be a non-trivial $p$-group. Then the extension
\[
F(BG_+,E) \lra F(EG_+,E)
\]
is not $G$-Galois. More precisely, this extension is
ramified:
\[
F(EG_+,E) \wedge_{F(BG_+,E)} F(EG_+,E) \nsim \prod_{G}F(EG_+,E).
\]
\end{thm}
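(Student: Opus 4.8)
The plan is to deduce the general $p$-group case from the cyclic case of Theorem~\ref{thm:E^BCp^r-ramified} by restricting along a subgroup. Since $G$ is a non-trivial $p$-group it contains a subgroup $H\iso C_p$; fix one (normality of $H$ will not be needed). First I would record the homotopy fixed point descriptions of the cochain algebras involved. The group $G$ acts freely on the contractible space $EG$, so $EG$ is also a model for $EH$, and for any free $G$-CW complex $X$ one has $F(X_+,E)^{hG}\sim F((X/G)_+,E)$. Applied to $X=EG$ with the $G$- and $H$-actions this gives $F(EG_+,E)^{hG}\sim F(BG_+,E)=E^{BG}$ and $F(EG_+,E)^{hH}\sim F(BH_+,E)=E^{BH}$, compatibly with the restriction map $F(EG_+,E)^{hG}\to F(EG_+,E)^{hH}$, which corresponds to the map $E^{BG}\to E^{BH}$ induced by $Bi\colon BH\to BG$ for the inclusion $i\colon H\hookrightarrow G$, and with the equivalences $F(EG_+,E)\sim F({EC_p}_+,E)\sim E$. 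Thus in Rognes' language $F(BG_+,E)\to F(EG_+,E)$ is the extension $B^{hG}\to B$ with $B=F(EG_+,E)$ carrying the action coming from $EG$, and $B^{hH}\sim E^{BH}$.

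Next, assume for contradiction that $F(BG_+,E)\to F(EG_+,E)$ is $G$-Galois. By the basic restriction formalism for Galois extensions \cite{JR:Opusmagnus}, a $G$-Galois extension $B^{hG}\to B$ restricts to an $H$-Galois extension $B^{hH}\to B$ for every subgroup $H\leq G$; in particular $B\wedge_{B^{hH}}B\sim F(H_+,B)$, which is the restriction of $B\wedge_{B^{hG}}B\sim F(G_+,B)$. Taking $H\iso C_p$ and using the identifications above, this says that $F({BC_p}_+,E)\to F({EC_p}_+,E)$ is a $C_p$-Galois extension and that $E\wedge_{E^{BC_p}}E\sim\prod_{C_p}E$; in particular $\pi_*(E\wedge_{E^{BC_p}}E)$ is concentrated in even degrees. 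But Theorem~\ref{thm:E^BCp^r-ramified} with $r=1$ shows that this extension is ramified, its proof exhibiting the nonzero group $E_*/pE_*$ in the odd-degree part of $\pi_*(E\wedge_{E^{BC_p}}E)$. This contradiction proves that $F(BG_+,E)\to F(EG_+,E)$ is not $G$-Galois. For the sharper ramification statement, recall that the unramified condition \eqref{eq:E^BG-unram} forces $\pi_*(E\wedge_{E^{BG}}E)$ to be concentrated in even degrees; by the same restriction this would force even concentration of $\pi_*(E\wedge_{E^{BC_p}}E)$, again contradicting Theorem~\ref{thm:E^BCp^r-ramified}. Hence $F(EG_+,E)\wedge_{F(BG_+,E)}F(EG_+,E)\nsim\prod_{G}F(EG_+,E)$.

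The only substantive point is the translation in the first paragraph: identifying $E^{BH}$ with the $H$-homotopy fixed points of $F(EG_+,E)$ so that restricting the hypothetical $G$-Galois structure lands precisely on the extension analysed in Theorem~\ref{thm:E^BCp^r-ramified}, and invoking the fact that the restriction of a Galois extension to a subgroup is again Galois in the generality required (faithfulness, should one want it, is supplied by Theorem~\ref{thm:faithfulness-EBG}). Once this is in place the contradiction is immediate from the explicit cyclic computation, and no new spectral sequence analysis beyond Theorem~\ref{thm:E^BCp^r-ramified} is needed.
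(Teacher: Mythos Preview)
Your argument is correct and takes a genuinely different route from the paper's.

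The paper proceeds by an explicit spectral sequence comparison. It chooses an epimorphism $G\twoheadrightarrow C_p$ and then a cyclic subgroup $C_{p^k}\hookrightarrow G$ whose composite onto $C_p$ is surjective, obtaining maps of K\"unneth spectral sequences
\[
\mathrm{E}^r_{**}(C_p)\lra \mathrm{E}^r_{**}(G)\lra \mathrm{E}^r_{**}(C_{p^k}).
\]
An explicit chain-level computation (using the periodic resolution from the proof of Theorem~\ref{thm:E^BCp^r-ramified}) shows that the composite $\mathrm{E}^2_{**}(C_p)\to\mathrm{E}^2_{**}(C_{p^k})$ is injective on the odd-filtration classes, and since both outer spectral sequences degenerate, these odd-degree classes must survive in $\mathrm{E}^\infty_{**}(G)$. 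This directly exhibits nonzero odd-degree elements in $\pi_*(E\wedge_{E^{BG}}E)$.

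Your approach instead invokes Rognes' sub-Galois theorem: a faithful $G$-Galois extension restricts to an $H$-Galois extension for any subgroup $H\leq G$. Combined with the identifications $F(EG_+,E)^{hH}\sim E^{BH}$ and the faithfulness supplied by Theorem~\ref{thm:faithfulness-EBG}, a hypothetical $G$-Galois structure would force $E^{BC_p}\to E$ to be $C_p$-Galois, contradicting Theorem~\ref{thm:E^BCp^r-ramified}. Two minor remarks: first, since the fixed-point condition $E^{BG}\sim E^{hG}$ always holds here, ``not $G$-Galois'' and ``ramified'' are equivalent, so your separate treatment of the ``sharper'' statement is redundant rather than an additional step. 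Second, Rognes' restriction result does use faithfulness, so your parenthetical appeal to Theorem~\ref{thm:faithfulness-EBG} is essential, not optional.

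What each approach buys: the paper's argument is self-contained and yields the stronger, concrete conclusion that $\pi_*(E\wedge_{E^{BG}}E)$ actually contains odd-degree classes (indeed, classes detected by the monomorphism from $\mathrm{E}^2_{**}(C_p)$). Your argument is shorter and more conceptual, but imports Rognes' restriction machinery and the paper's own faithfulness theorem, and it only establishes the failure of~\eqref{eq:E^BG-unram} by contradiction rather than locating the obstruction.
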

\begin{proof}
Choose a non-trivial epimorphism $G\lra C_p$; then for
some $k\geq 1$ there is a factorization
\smallskip
\begin{equation}\label{eq:E^BG-ramified}
\xymatrix{
C_{p^k}\ar@{ >->}[r]\ar@/^12pt/@{->>}[rr] &G\ar@{->>}[r] &C_p
}
\end{equation}
inducing morphisms between the associated K\"unneth spectral
sequences
\begin{equation}\label{eq:KSS-comparisons}
\mathrm{E}^r_{**}(C_p)\lra \mathrm{E}^r_{**}(G)
                          \lra \mathrm{E}^r_{**}(C_{p^k}).
\end{equation}
As we saw in the proof of Theorem~\ref{thm:E^BCp^r-ramified},
the two outer spectral sequences have trivial differentials.
We will analyze the composite morphism
$\mathrm{E}^2_{**}(C_p)\lra\mathrm{E}^2_{**}(C_{p^k})$.

On choosing generators appropriately, the canonical epimorphism
$C_{p^k}\lra C_p$ induces the $E_*$-algebra monomorphism
\[
(E^{BC_p})_* = E_*[[y]]/([p]y) \lra (E^{BC_{p^k}})_* = E_*[[y]]/([p^k]y);
                                     \quad y \mapsto [p^{k-1}]y,
\]
hence the induced map between the two resolutions of the
form~\eqref{eq:E^*Cpr-res} is
\begin{equation*}
\xymatrix{
0 & E_*\ar[d]_{=}\ar[l] & \ar[l] (E^{BC_{p}})_*\ar@<-1ex>[d]^{\rho_0}
     && \ar[ll]_(0.5){\;y\;} (E^{BC_{p}})_* \ar[d]^{\rho_1}
     &&  \ar[ll]_{\;\<p\>y\;} (E^{BC_{p}})_*\ar[d]^{\rho_2}  & \ar[l]_(0.3){\;y\;} \cdots \\
0 & E_*\ar[l] & \ar[l] (E^{BC_{p^k}})_*  &&  \ar[ll]_(0.5){\;y\;} (E^{BC_{p^k}})_*
     &&  \ar[ll]_{\;\<p^k\>y\;} (E^{BC_{p^k}})_* & \ar[l]_(0.3){\;y\;} \cdots
}
\end{equation*}
where the vertical maps are given by
\[
\rho_{2s}\:g(y)\mapsto g([p^{k-1}]y),
\quad
\rho_{2s-1}\:h(y)\mapsto h([p^{k-1}]y)\<p^{k-1}\>y.
\]
%Applying $E_*\otimes_{(E^{BC_{p^r}})_*}(-)$ with $r=1,k$,
%we obtain a map of chain complexes
Applying $E_*\otimes_{(E^{BC_{p^r}})_*}(-)$ to the first
and second rows with $r=1$ and $k$ respectively, we obtain
a map of chain complexes
\begin{equation*}
\xymatrix{
0 & \ar[l] E_*\ar@<0ex>[d]^{\rho'_0}_{=}
     && \ar[ll]_(0.5){\;0\;} E_* \ar[d]^{\rho'_1=p^{k-1}\.}
     &&  \ar[ll]_{\;p\;} E_*\ar[d]^{\rho'_2}_{=} & &\ar[ll]_(0.5){\;0\;} \cdots \\
0 & \ar[l] E_*  &&  \ar[ll]_(0.5){\;0\;} E_*
     &&  \ar[ll]_{\;p^k\;} E_* & &\ar[ll]_(0.5){\;0\;} \cdots
}
\end{equation*}
where
\[
\rho'_{2s}=\id,
\quad
\rho'_{2s-1}=p^{k-1}\.\;.
\]
Applying this to the odd degree terms given in~\eqref{eq:E^*Cpr-Tor}
we see that the induced map
\[
E_*/pE_* \xrightarrow{\;p^{k-1}\.\;} E_*/p^kE_*
\]
is always a monomorphism. Therefore in~\eqref{eq:KSS-comparisons},
the first of the induced morphisms
\[
\mathrm{E}^2_{**}(C_p)\lra \mathrm{E}^r_{**}(G)
                          \lra \mathrm{E}^r_{**}(C_{p^k})
\]
is a monomorphism. There can be no higher differentials killing
elements in its image because they map to non-trivial elements
of $\mathrm{E}^2_{**}(C_{p^k})$ which survive the right hand
spectral sequence. This shows that $\mathrm{E}^\infty_{**}(G)$
contains elements of odd degree, and as in the cyclic group
case this is incompatible with the unramified condition.
\end{proof}

We can extend this result to the class of $p$-nilpotent groups.
A finite group $G$ is $p$-nilpotent if one and hence each $p$-Sylow subgroup
$P\leq G$ has a normal $p$-complement, \ie, there is a normal
subgroup $N\ideal G$ with $p\nmid |N|$ and $G=PN=P\ltimes N$.
A convenient summary of the properties of such groups can be
found in~\cite[section~7]{JM:p-nilpotent}, see also~\cite{DJSR:Book}.

\begin{cor}\label{cor:E^BG-ramified}
If $G$ is a $p$-nilpotent group for which $p$ divides $|G|$,
then the extension
\[
F(BG_+,E) \lra F(EG_+,E)
\]
is ramified and so is not $G$-Galois.
\end{cor}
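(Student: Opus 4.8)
The plan is to derive this from the $p$-group case, Theorem~\ref{thm:E^BG-ramified}, by observing that for $p$-nilpotent $G$ a $p$-Sylow subgroup $P\leq G$ is a \emph{retract} of $G$, so that $F(BP_+,E)$ is a retract of $F(BG_+,E)$ and the odd-degree ``ramification classes'' for $P$ survive into those for $G$.

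First I would unwind the hypothesis. Write $G=P\ltimes N$ with $P$ a $p$-Sylow subgroup and $N\ideal G$ the normal $p$-complement; since $p\mid|G|$, the group $P$ is non-trivial. The quotient $G\lra G/N$ followed by the canonical isomorphism $G/N\iso P$ is an epimorphism split by the inclusion $P\hookrightarrow G$, so on classifying spaces we obtain based maps $BP\xrightarrow{\;\iota\;}BG\xrightarrow{\;\pi\;}BP$ with $\pi\circ\iota\homeq\id_{BP}$. Applying $F(-_+,E)$ yields $E$-algebra maps $\iota^*\:F(BG_+,E)\to F(BP_+,E)$ and $\pi^*\:F(BP_+,E)\to F(BG_+,E)$ with $\iota^*\circ\pi^*\homeq\id$. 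Writing $\epsilon_P\:F(BP_+,E)\to E$ and $\epsilon_G\:F(BG_+,E)\to E$ for the augmentations, which are induced by the basepoint inclusions $*\to BH$ (via $EH\homeq*$) and hence compatible with the based maps $\iota,\pi$, we get $\epsilon_P\circ\iota^*\homeq\epsilon_G$ and $\epsilon_G\circ\pi^*\homeq\epsilon_P$. Thus $F(BP_+,E)$ is a retract of $F(BG_+,E)$ as an augmented $E$-algebra.

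Next I would transfer the ramification. Since in each $E\wedge_{F(BH_+,E)}E$ the two copies of $E$ are modules through the augmentation, the compatibilities $\epsilon_P\circ\iota^*\homeq\epsilon_G$ and $\epsilon_G\circ\pi^*\homeq\epsilon_P$ give base-change maps in $\mathscr{D}_E$
\[
E\wedge_{F(BP_+,E)}E\lra E\wedge_{F(BG_+,E)}E\lra E\wedge_{F(BP_+,E)}E
\]
(along $\pi^*$ and $\iota^*$ respectively) whose composite is induced by $\iota^*\circ\pi^*\homeq\id$ and hence is the identity. So $E\wedge_{F(BP_+,E)}E$ is a retract of $E\wedge_{F(BG_+,E)}E$. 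By Theorem~\ref{thm:E^BG-ramified} applied to the non-trivial $p$-group $P$, the spectrum $E\wedge_{F(BP_+,E)}E$ has non-zero homotopy in odd degrees; being a retract, $E\wedge_{F(BG_+,E)}E$ does too. This contradicts the unramified condition~\eqref{eq:E^BG-unram}, which forces $\pi_*(E\wedge_{F(BG_+,E)}E)$ to be concentrated in even degrees. Hence $F(BG_+,E)\lra F(EG_+,E)$ is ramified, and in particular not $G$-Galois. (Equivalently one can compare the K\"unneth spectral sequences $\Tor^{(E^{BH})_*}_{**}(E_*,E_*)\Lra\pi_*(E\wedge_{E^{BH}}E)$ as in the proof of Theorem~\ref{thm:E^BCp^r-ramified}, obtaining the odd-degree classes on $\mathrm{E}^\infty_{**}(G)$ as a retract summand of those for $P$.)

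The only step requiring genuine care — and the one I expect to be the main obstacle — is this last descent: checking carefully that the augmentations really are compatible with $\iota^*$ and $\pi^*$, so that $E$ acquires matching module structures on both sides and the base-change maps above are well defined and compose correctly. By contrast, no $p$-localization subtleties arise here, since the retraction $BP\to BG\to BP$ comes from honest group homomorphisms rather than a Becker--Gottlieb transfer, and everything in sight is already $K$-local.
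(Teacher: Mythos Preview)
Your argument is correct but takes a different route from the paper's. The paper invokes Tate's theorem that $G$ is $p$-nilpotent if and only if the restriction $\res^G_P\colon H^*(BG;\F_p)\to H^*(BP;\F_p)$ is an isomorphism, then compares Serre spectral sequences to conclude that $K^*(BG_+)\xrightarrow{\iso}K^*(BP_+)$ and hence $E^*(BG_+)\xrightarrow{\iso}E^*(BP_+)$; once $E^{BG}\simeq E^{BP}$, the result is immediate from Theorem~\ref{thm:E^BG-ramified}. You instead exploit the semidirect product structure directly: the group-theoretic splitting $P\hookrightarrow G\twoheadrightarrow P$ makes $E^{BP}$ a retract of $E^{BG}$ as augmented $E$-algebras, hence $E\wedge_{E^{BP}}E$ a retract of $E\wedge_{E^{BG}}E$, so the odd-degree witnesses for $P$ persist. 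Your concern about the augmentation compatibilities is well placed but easily resolved: since $\iota$ and $\pi$ are based and $\epsilon_H=j_H^*$ for the basepoint inclusion $j_H\colon *\to BH$, one has $\epsilon_P\circ\iota^*=(\iota\circ j_P)^*=j_G^*=\epsilon_G$ and likewise for $\pi$. Your approach is more elementary, needing neither Tate's criterion nor any spectral sequence comparison; the paper's yields the stronger intermediate statement $E^{BG}\simeq E^{BP}$, which is of independent interest.
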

\begin{proof}
By a result of Tate~\cite{Tate:nilpotentquot}, $G$ being
$p$-nilpotent is equivalent to the restriction homomorphism
giving an isomorphism
\begin{equation*}
\res^G_P\:H^*(BG;\F_p) \xrightarrow{\;\iso\;} H^*(BP;\F_p),
\end{equation*}
and in fact it is sufficient that this holds in degree~$1$.
Comparison of the Serre spectral sequences for $K^*(BG_+)$
and $K^*(BP_+)$ shows that
\begin{equation*}
K^*(BG_+) \xrightarrow{\;\iso\;} K^*(BP_+).
\end{equation*}
It now follows that
\begin{equation*}
E^*(BG_+) \xrightarrow{\;\iso\;} E^*(BP_+).
\end{equation*}
and the result can be deduced from Theorem~\ref{thm:E^BG-ramified}.
\end{proof}
\begin{rem}\label{rem:p-nilpotent}
The condition of $G$ being a $p$-nilpotent group should not
be confused with the condition that the conjugation action
of $G$ on $\F_p[G]$ is nilpotent. The latter is used
in~\cite[proposition~5.6.3]{JR:Opusmagnus} to ensure convergence
of the Eilenberg-Moore spectral sequence and so to prove that
for such groups
\[
F(BG_+,H\F_p) \lra F(EG_+,H\F_p)
\]
is a $G$-Galois extension. The example of $G=\Sigma_3$, the
third symmetric group, for the prime $p=2$ illustrates this.
For each of the Sylow $2$-subgroups
\begin{equation*}
\{\id,(1,2)\},\;\{\id,(1,3)\},\;\{\id,(2,3)\}
\end{equation*}
has as normal complement
\begin{equation*}
N = \{\id,(1,2,3),(1,3,2)\},
\end{equation*}
therefore $\Sigma_3$ is $2$-nilpotent. However, the
$\Sigma_3$-module $\F_2[\Sigma_3]$ contains the
$2$-dimensional non-trivial simple submodule
\begin{equation*}
V = \{x(1,2)+y(1,3)+z(2,3):x+y+z=0\},
\end{equation*}
so by Jordan-H\"older theory every composition
series for $\F_2[\Sigma_3]$ must have this as a
composition factor. Hence the action of $\Sigma_3$
on $\F_2[\Sigma_3]$ cannot be nilpotent.
\end{rem}

\section{Some observations on the Eilenberg-Moore
                    spectral sequence}\label{sec:EMSS}

In \cite[section~5.6]{JR:Opusmagnus}, it is shown that
for a finite $p$-group $G$, the Eilenberg-Moore spectral
sequence with
\begin{equation}\label{eq:EMSS-HFp}
\mathrm{E}^2_{s,t} = \Tor^{H^*(BG_+;\F_p)}_{s,t}(\F_p,\F_p)
\end{equation}
converges to $\pi_*(F(G_+,H\F_p)) = \pi_*(\prod_G\F_p)$.
By comparing it with the K\"unneth spectral sequence
for $\pi_*(H\F_p\wedge_{F(BG_+,H\F_p)}H\F_p)$, it
is also shown that
\[
F(BG_+,H\F_p) \lra F(EG_+,H\F_p)
\]
is a $G$-Galois extension.

Let us consider in detail the case $G=C_p$ for $p$ an odd
prime. The case when $p=2$ is similar. First we write
\[
H^*(BC_p) = H^*({BC_p}_+;\F_p) = \F_p[y]\otimes\Lambda(z),
\]
where $y\in H^2(BC_p)$ and $z\in H^1(BC_p)$. Then~\eqref{eq:EMSS-HFp}
becomes
\[
\mathrm{E}^2_{**} = \Gamma(\sigma z)\otimes\Lambda(\sigma y),
\]
where $\sigma y \in \mathrm{E}^2_{1,-2}$ and
$\sigma z \in \mathrm{E}^2_{1,-1}$ are the suspensions of $y$
and $z$, see~\cite{RW}. Writing $\gamma_r = \gamma_r(\sigma z)$.
The first non-trivial differential is
\[
d^{p-1}\gamma_p = \sigma y,
\]
and we have
\[
\mathrm{E}^p_{**} = \F_p[\zeta]/(\zeta^p)
    \otimes \Gamma(\gamma_{p^2})\otimes\Lambda(\gamma_p\sigma y),
\]
where $\zeta$ represents the class of $\sigma z$. The remaining
differentials are determined by the formulae
\[
d^{p^s-p^{s-1}-1}\gamma_{p^s} = \gamma_{p^{s-1}}\sigma y
\]
in
\[
\mathrm{E}^{p^s-p^{s-1}-1}_{**} = \F_p[\zeta]/(\zeta^p)
    \otimes \Gamma(\gamma_{p^s})\otimes\Lambda(\gamma_{p^{s-1}}\sigma y).
\]
Finally we have
\[
\mathrm{E}^\infty_{**} = \F_p[\zeta]/(\zeta^p),
\]
which is an avatar of $\prod_{C_p}\F_p$. These differentials
are forced by the known answer and multiplicativity, and are
also related to the discussion of~\cite[section~6]{RW}.
%Our Theorems~\ref{thm:E^BCp^r-ramified} and~\ref{thm:E^BG-ramified}
%show that the Eilenberg-Moore spectral sequence with
%\[
%{}^{\text{L-T}}\mathrm{E}^2_{s,t} = \Tor^{(E^{BG})_*}(E_*,E_*)
%\]
%based on Lubin-Tate theory does not always converge to
%$\pi_*(\prod_G E)$.
For Lubin-Tate theory $(E^{BC_{p^r}})_*$ is free over $E_*$
and the comparison of the Eilenberg-Moore with the K\"unneth
spectral sequence together with our
Theorems~\ref{thm:E^BCp^r-ramified} and~\ref{thm:E^BG-ramified}
has the following consequence.
\begin{prop} \label{prop:emen}
For the cyclic $p$-group $C_{p^r}$ the $E$-theory Eilenberg-Moore
spectral sequence for $BC_{p^r}$ with
\begin{equation*}
{}^{\text{\rm\tiny L-T}}\mathrm{E}^2_{s,t}
                             = \Tor^{(E^{BC_{p^r}})_*}(E_*,E_*)
\end{equation*}
does not converge to $\pi_*(\prod_{C_{p^r}}E)$.
\end{prop}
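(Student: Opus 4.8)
The plan is to evaluate the $\mathrm{E}^2$-page of this Eilenberg-Moore spectral sequence, to observe, exactly as for the K\"unneth spectral sequence~\eqref{eq:KSS-Cp^r} in the proof of Theorem~\ref{thm:E^BCp^r-ramified}, that it carries no non-trivial differentials, and then to notice that the resulting $\mathrm{E}^\infty=\mathrm{E}^2$ already contains classes in odd total degree, whereas the expected abutment is concentrated in even degrees. Tensoring the periodic free resolution~\eqref{eq:E^*Cpr-res} of $E_*$ with $E_*$ over $(E^{BC_{p^r}})_*$ gives, as in~\eqref{eq:E^*Cpr-Tor},
\begin{equation*}
{}^{\text{\rm\tiny L-T}}\mathrm{E}^2_{s,t} = \Tor^{(E^{BC_{p^r}})_*}_{s,t}(E_*,E_*) =
\begin{cases}
E_t & \text{if $s=0$},\\
E_t/p^rE_t & \text{if $s$ is odd},\\
0 & \text{otherwise};
\end{cases}
\end{equation*}
in particular the whole page sits in even internal degree $t$, since $E_t=0$ for $t$ odd.

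Next I would rule out differentials. With the indexing of~\eqref{eq:KSS-Cp^r} a differential $d^r$ lowers the homological degree $s$ by $r$ and raises the internal degree $t$ by $r-1$; since the page is concentrated in even internal degree, only differentials of odd length $r$ can be non-zero, and such a $d^r$ carries an odd homological line to an even one. The only non-zero even homological line is $s=0$, where the groups $E_t$ are torsion-free, while the possible sources $E_t/p^rE_t$ are $p^r$-torsion, so every such differential vanishes; and nothing maps into the odd lines. Hence $\mathrm{E}^\infty=\mathrm{E}^2$, which has non-zero $p^r$-torsion in every odd total degree.

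Finally, the homotopy of the expected target, $\pi_*(\prod_{C_{p^r}}E)=\prod_{C_{p^r}}E_*$, is concentrated in even degrees, so any spectral sequence converging strongly to it would have $\mathrm{E}^\infty$ concentrated in even total degree, contradicting the previous paragraph; therefore this Eilenberg-Moore spectral sequence does not converge to $\pi_*(\prod_{C_{p^r}}E)$. Equivalently, comparing it with the K\"unneth spectral sequence~\eqref{eq:KSS-Cp^r}, which has the same $\mathrm{E}^2$-page but abuts to $\pi_*(E\wedge_{E^{BC_{p^r}}}E)$, convergence would force the unramified equivalence~\eqref{eq:E^BG-unram} with $G=C_{p^r}$, which Theorem~\ref{thm:E^BCp^r-ramified} excludes. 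The only point needing care is being precise about the sense in which convergence fails --- one wants to be sure that the odd-degree $p^r$-torsion is a genuine obstruction and not a feature of the way the spectral sequence itself converges; but since there are no differentials at all, $\mathrm{E}^\infty=\mathrm{E}^2$ on the nose and the obstruction is unambiguous, so beyond the bookkeeping of internal degrees above (which runs parallel to the argument in Section~\ref{sec:E^BG&GaloisThy}) there is essentially nothing further to verify.
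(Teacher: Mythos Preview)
Your argument is correct and is essentially the same as the paper's: the paper does not give a separate proof but simply notes before the statement that, since $(E^{BC_{p^r}})_*$ is free over $E_*$, the Eilenberg--Moore and K\"unneth spectral sequences share the same $\mathrm{E}^2$-page, and then invokes Theorems~\ref{thm:E^BCp^r-ramified} and~\ref{thm:E^BG-ramified}. You have unpacked this by rerunning the no-differentials argument from the proof of Theorem~\ref{thm:E^BCp^r-ramified} directly on the Eilenberg--Moore $\mathrm{E}^2$-page and observing that the resulting odd total-degree torsion is incompatible with the even-degree target $\prod_{C_{p^r}}E_*$; your closing remark about the comparison with~\eqref{eq:KSS-Cp^r} is exactly the paper's one-line justification.
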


Just as in the $H\F_p$ case, we can compare the Morava $K$-theory
based Eilenberg-Moore spectral sequence with the K\"unneth spectral
sequence. Work of Bauer~\cite{TB:EMSS} on the convergence of the
$\Cotor$-version of this Eilenberg-Moore spectral sequence shows
that the corresponding spectral sequence converges for $G=C_p$
and odd primes~$p$, and therefore
\begin{equation*}
K \wedge_{K^{BC_p}} K \sim \prod_{C_p} K.
\end{equation*}
The extension of $S$-algebras $K^{BC_p} \lra K^{EC_p}$ can be
interpreted as a Galois extension of non-commutative $S$-algebras.

\end{document}